\newtheorem{theorem}{Theorem}[section]
\newtheorem{corollary}[theorem]{Corollary}
\newtheorem{lemma}[theorem]{Lemma}
\newtheorem{prop}[theorem]{Proposition}
\begin{document}

\begin{frontmatter}[classification=text]
%% EDITOR: this will force the keywords to appear right after the Abstract.
%%   If the abstract is too long and would force the keywords off the
%%   front page, please comment out % [classification=text] above
%%   This way the keywords will be floated on the bottom of the first page
%%   even though the Abstract spills over to the next page.

%%% AUTHOR: Title goes here.  This line is optional.  You must use it
%%   if title has footnote attached or requires nontrivial typesetting,
%%   e.g., inclusion of linebreaks to force nice layout.
\title{Square functions and  the Hamming cube: Duality} %% please capitalize all significant words

%%% AUTHOR:
%%% List all authors. If you wish, place grant acknowledgements in \thanks.
%%% In brackets include a short tag for each author.
\author[pgom]{Paata Ivanisvili\thanks{This paper is  based upon work supported by the National Science Foundation under Grant No. DMS-1440140 while two of the authors were in residence at the Mathematical Sciences Research Institute in Berkeley, California, during the Spring and Fall  2017 semester.}}
\author[johan]{Fedor Nazarov\thanks{Supported by NSF DMS-0800243}}
\author[laci]{Alexander Volberg\thanks{Supported by NSF DMS-1600065}}
%\author[andy]{Andrew Chi-Chih Yao\thanks{Supported by...}}

%%% AUTHOR: Abstract goes here
\begin{abstract}
For $1<p\leq 2$,  any $n\geq 1$ and any  $f:\{-1,1\}^{n} \to \mathbb{R}$,  we obtain $(\mathbb{E} |\nabla f|^{p})^{1/p} \geq c(p)(\mathbb{E}|f|^{p} - |\mathbb{E}f|^{p})^{1/p}$ where $c(p)$ is the smallest positive zero of the confluent hypergeometric function ${}_{1}F_{1}(\frac{p}{2(1-p)}, \frac{1}{2}, \frac{x^{2}}{2})$.   Our approach is based on a certain duality  between the classical square function estimates on the Euclidean space and the gradient estimates on the Hamming cube.
\end{abstract}
\end{frontmatter}

\section{Main result}
Consider the Hamming cube $\{-1,1\}^{n}$ of an arbitrary dimension $n\geq 1$. For any $f :\{-1,1\}^{n} \to \mathbb{R}$ define the discrete partial derivative $\partial_{j} f(x)$ as follows
\begin{align*}
\partial_{j} f(x) = \frac{f(x)-f(S_{j}(x))}{2}, \quad x = (x_{1}, \ldots, x_{n}) \in \{-1,1\}^{n}, 
\end{align*}
where $S_{j}(x)$ is obtained from $x$ by changing the sign of $j$'th coordinate of $x$. Set $\nabla f(x) := (\partial_{1} f(x), \ldots, \partial_{n} f(x))$, and we define the norm of the discrete gradient
\begin{align*}
|\nabla f|^{2}(x) := \sum_{j=1}^{n} (\partial_{j} f(x))^{2} = \sum_{y \sim x} \left( \frac{f(x)-f(y)}{2}\right)^{2},
\end{align*}
where the summation in the last term runs over all neighbor vertices of $x$ in $\{-1,1\}^{n}$. Set 
\begin{align*}
\mathbb{E} f = \frac{1}{2^{n}} \sum_{x \in \{-1,1\}^{n}} f(x).
\end{align*}

\begin{theorem}\label{osnovnaya}
For any $1<p\leq 2$, $n\geq 1$, and any $f:\{-1,1\}^{n} \to \mathbb{R}$ we have
\begin{align}
s_{p'} (\mathbb{E} |f|^{p} - |\mathbb{E} f|^{p})^{1/p} \leq  (\mathbb{E} |\nabla f|^{p})^{1/p}. \label{nasha}
\end{align}
Here $p'=\frac{p}{p-1}$ is the conjugate exponent of $p$, and by $s_{q}$ we denote the smallest positive zero of the confluent hypergeometric function ${}_{1}F_{1}(-\frac{q}{2}, \frac{1}{2}, \frac{x^{2}}{2})$ (see (\ref{series}) for the definition). 
\end{theorem}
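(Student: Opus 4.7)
My plan is to exploit the duality announced by the paper's title. The right-hand side of \eqref{nasha} is an $L^p(\ell^2)$ norm of $\nabla f$, whose dual pairing is with $L^{p'}(\ell^2)$; combined with the self-adjointness of each $\partial_j$ with respect to $\mathbb{E}$ (the discrete integration by parts $\mathbb{E}[(\partial_j f)g_j]=\mathbb{E}[f\,\partial_j g_j]$), this yields
\[
(\mathbb{E}|\nabla f|^p)^{1/p}
  \;=\; \sup_{\|g\|_{L^{p'}(\ell^2)}\le 1}\;\mathbb{E}\Bigl[f\sum_{j=1}^n\partial_j g_j\Bigr].
\]
I would then try to recast \eqref{nasha} as the dual statement that the operator $\mathrm{div}\colon g\mapsto\sum_j\partial_j g_j$ admits a right inverse, on mean-zero functions, bounded by $s_{p'}^{-1}$ from $L^{p'}$ to $L^{p'}(\ell^2)$. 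The natural construction comes from the cube semigroup: the cube Laplacian $L=\sum_j\partial_j^{\,2}$ is invertible on mean-zero functions, and setting $g_j=\partial_j L^{-1}h$ gives $\mathrm{div}(g)=h$, so that the task becomes the sharp square function estimate
\[
\Bigl\|\bigl(\textstyle\sum_j|\partial_j L^{-1}h|^2\bigr)^{1/2}\Bigr\|_{p'}
  \;\le\; s_{p'}^{-1}\|h\|_{p'}.
\]
The fact that $\mathbb{E}|f|^p-|\mathbb{E}f|^p$ is not a norm for $p\ne 2$ would be handled by pairing $f$ against the subdifferential of $t\mapsto|t|^p$ evaluated at $\mathbb{E}f$, so that the asymmetry between $\|f-\mathbb{E}f\|_p$ and the left-hand quantity is absorbed into the choice of dual test function.

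The next step is to identify the sharp constant. The cube square function tensorizes under products; applied to symmetric functions $f(x)=F((x_1+\dots+x_n)/\sqrt n)$, the CLT passes the estimate to the corresponding bound on $(\mathbb{R},d\gamma)$ for the Ornstein--Uhlenbeck semigroup, which is a classical vertical Littlewood--Paley square function. The Gaussian extremal problem is a Sturm--Liouville eigenvalue problem whose defining ODE is the Hermite equation $u''-xu'+qu=0$; the even (non-polynomial) solution is precisely ${}_1F_1(-q/2,1/2,x^2/2)$, and its first positive zero $s_q$ (with $q=p'$) is the first nodal point of the lowest eigenfunction on the half-line, matching exactly the sharp Gaussian constant. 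So the confluent hypergeometric zero is forced by the shape of the problem.

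The main obstacle, I expect, is promoting the Gaussian sharp constant to the cube without CLT loss. To achieve this I would follow the Bellman-function route that the authors have developed in related work: build an explicit function $B$ on a low-dimensional phase space that parametrises the joint data of $f$, $\mathbb{E}f$ and $|\nabla f|$, such that a single concavity inequality of $B$ along every cube edge implies the pointwise square function bound, and whose boundary behavior is calibrated against the Hermite problem so that the constant that finally emerges is exactly $s_{p'}$. Writing down the correct $B$, checking its concavity, and matching its parameters to the confluent hypergeometric ODE to certify $s_{p'}$ (rather than a weaker constant) is where the bulk of the technical effort will lie.
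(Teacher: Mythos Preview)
Your proposal misidentifies what ``duality'' means here and, as a result, misses the paper's central idea.  The duality is \emph{not} $L^{p}$--$L^{p'}$ duality between $\nabla f$ and test vector fields $g$; it is a Legendre-type min--max transform between \emph{Bellman functions} for two different extremal problems.  Concretely: Davis's function $U(p,q)$, which encodes the sharp square-function inequality $s_{\alpha}\|T^{1/2}\|_{\alpha}\le\|B_{T}\|_{\alpha}$ on $[0,1]$ (with $\alpha=p'$), is fed into
\[
M(x,y)=\min_{q\le 0}\sup_{p\in\mathbb{R}}\bigl(px+qy+U(p,q)\bigr),
\]
and this $M$ is then shown to satisfy the two-variable concavity inequality on the cube together with the obstacle $M(x,y)\ge C(|x|^{\beta}-y^{\beta}/s_{\alpha}^{\beta})$.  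The Hermite ODE and the zero $s_{p'}$ enter not through a Gaussian CLT limit but because they are already baked into Davis's $U$; the min--max simply transports them to the cube side.  This is the mechanism that produces the correct Bellman function and the correct constant simultaneously --- the step you leave as ``writing down the correct $B$'' is in fact the whole paper.

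Your proposed reduction also introduces a genuine obstacle rather than removing one.  The bound $\bigl\|\bigl(\sum_j|\partial_j L^{-1}h|^2\bigr)^{1/2}\bigr\|_{p'}\le s_{p'}^{-1}\|h\|_{p'}$ is a dimension-free Riesz-transform square-function estimate on the cube with a specific sharp constant; such bounds are at least as hard as the original inequality (compare the $2/\pi$ constant of Ben-Efraim--Lust-Piquard), and there is no reason to expect the particular right inverse $g_j=\partial_j L^{-1}h$ to be extremal.  Moreover, that route would yield a lower bound for $\|f-\mathbb{E}f\|_{p}$, not for $(\mathbb{E}|f|^{p}-|\mathbb{E}f|^{p})^{1/p}$; your remark about ``pairing against the subdifferential of $|t|^{p}$ at $\mathbb{E}f$'' does not bridge this gap, since those are inequivalent quantities for $p\ne 2$ (the paper itself discusses the relation $T_{p}$ versus $c_{p}$).
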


%We will show that $s_{p'}$, where $p'$ is the conjugate exponent of $p$, is the smallest positive zero of a confluent hypergeometric function. 
In Lemma~\ref{smalllest} we obtain a lower bound $s_{p'}\geq \sqrt{2/p'}$ for $1<p\leq 2$ which is precise when $p\to 2$. 
If $p'=2k$ for $k\in \mathbb{N}$, then $s_{p'}$ becomes the smallest positive zero of the Hermite polynomial $H_{2k}(x)$ where 
\begin{align*}
H_{m}(x) = \int_{\mathbb{R}}(x+iy)^{m} \frac{e^{-y^{2}/2}}{\sqrt{2\pi}} dy.
\end{align*}
The  constant  $s_{p'}$ in (\ref{nasha}) is larger then all previously known bounds~\cite{NAOR, Efraim} when $p$ is in a neighborhood of $2$, say $p\in (1.26, 2)$. For example, the estimate (\ref{nasha}) improves the Naor--Schechtman bound \cite{NAOR}  for the class of real valued functions for all $1<p<2$. Indeed, it follows from an application of Khinchin inequality with the sharp constant and (\ref{nasha}) that we have the following corollary 
%\begin{align*}
%2^{}|\nabla f|^{p}(x) \leq  \mathbb{E}_{x'}\left|\sum_{j=1}^{n} x'_{j} \partial_{j}f(x) \right|^{p}
%\end{align*}
%where $\mathbb{E}_{x'}$ averages in variable $x'=(x'_{1}, \ldots, x'_{n}) \in \{-1,1\}^{n}$. Here 

\begin{corollary}\label{asebi}
For any $1<p\leq 2$, $n\geq 1$, and any $f : \{-1,1\}^{n} \to\mathbb{R}$ we have 
\begin{align}\label{assaf1}
s^{p}_{p'}  2^{\frac{p-2}{2}} \min\left(1,\frac{\Gamma((p+1)/2)}{\Gamma(3/2)} \right)  \mathbb{E}| f- \mathbb{E}f |^{p} \leq  \mathbb{E}_{x}\, \mathbb{E}_{x'} \left|\sum_{j=1}^{n} x'_{j}\partial_{j} f(x)\right |^{p},
\end{align}
where $\mathbb{E}_{x}$ and  $\mathbb{E}_{x'}$ average in variables $x$ and $x' =(x'_{1}, \ldots, x'_{n}) \in \{-1,1\}^{n}$ correspondingly. 
\end{corollary}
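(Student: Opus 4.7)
The hint in the text itself spells out the strategy: combine Theorem \ref{osnovnaya} with the sharp real-valued Khinchin inequality. The plan is to apply \eqref{nasha} to the centered function $g := f - \mathbb{E}f$, then pass from $\mathbb{E}|\nabla f|^p$ to the desired double average by integrating out Rademacher signs.

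First I would observe that the discrete gradient is translation-invariant, $\nabla g = \nabla f$, while $\mathbb{E}g = 0$, so plugging $g$ into \eqref{nasha} yields
\[
s_{p'}^{p}\,\mathbb{E}|f - \mathbb{E}f|^{p} \;\leq\; \mathbb{E}|\nabla f|^{p} \;=\; \mathbb{E}_{x}\Bigl(\textstyle\sum_{j=1}^{n}(\partial_{j}f(x))^{2}\Bigr)^{p/2}.
\]
Thus it only remains to bound the right-hand side, pointwise in $x$, by $\mathbb{E}_{x'}\bigl|\sum_{j} x'_{j}\partial_{j}f(x)\bigr|^{p}$ up to a universal constant depending on $p$.

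That is exactly the content of the sharp Khinchin inequality for real scalars: for any $a=(a_{1},\ldots,a_{n})\in\mathbb{R}^{n}$ and $1<p\le 2$,
\[
\mathbb{E}_{x'}\Bigl|\sum_{j=1}^{n} x'_{j} a_{j}\Bigr|^{p} \;\geq\; A_{p}^{p}\,\Bigl(\sum_{j=1}^{n} a_{j}^{2}\Bigr)^{p/2},
\]
with the optimal constant computed by Haagerup. A direct comparison of the two regimes in Haagerup's theorem shows
\[
A_{p}^{p} \;=\; 2^{\frac{p-2}{2}} \min\!\left(1,\; \frac{\Gamma((p+1)/2)}{\Gamma(3/2)}\right),
\]
since the two candidate values $2^{p/2-1}$ and $2^{p/2}\Gamma((p+1)/2)/\sqrt{\pi}$ differ precisely by the factor $\Gamma((p+1)/2)/\Gamma(3/2)$, and $\Gamma(3/2)=\sqrt{\pi}/2$. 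I would apply this with $a_{j}=\partial_{j}f(x)$ for each fixed $x$ and then take $\mathbb{E}_{x}$.

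The only step requiring genuine input is citing Haagerup's sharp constant; everything else is substitution. I would present the argument in three short lines: (i) \eqref{nasha} applied to $f-\mathbb{E}f$, (ii) pointwise Khinchin in the $x'$-variable, (iii) Fubini to chain the two inequalities into \eqref{assaf1}. No technical obstacle is expected.
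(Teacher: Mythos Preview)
Your proposal is correct and follows exactly the route the paper indicates: apply \eqref{nasha} to $f-\mathbb{E}f$ (using translation invariance of $\nabla$) and then, for each fixed $x$, invoke Haagerup's sharp lower Khinchin constant to pass from $|\nabla f(x)|^{p}$ to $\mathbb{E}_{x'}\bigl|\sum_j x'_j\partial_j f(x)\bigr|^{p}$. Your identification $A_p^{p}=2^{(p-2)/2}\min\bigl(1,\Gamma((p+1)/2)/\Gamma(3/2)\bigr)$ is the correct rewriting of Haagerup's two regimes, and the paper offers no further detail beyond what you have written.
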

We will see in Proposition~\ref{comp} that 
\begin{align*}
s^{p}_{p'}  2^{\frac{p-2}{2}} \min\left(1,\frac{\Gamma((p+1)/2)}{\Gamma(3/2)} \right) > (p-1)^{p} \quad \text{for} \quad 1<p<2. 
\end{align*}
The latter implies that the estimate (\ref{assaf1}) improves the bound  of  Naor--Schechtman  for $1<p<2$ in the case of real valued functions (see Theorem~1 in \cite{NAOR} where $\beta_{p}(\mathbb{R})=1/(p-1)$).

\bigskip

On the other hand  $s_{p'}$ degenerates to $0$ when $p\to 1+$ which should not be the case for the best possible constant  by a result of Talagrand (see Section~\ref{sobolevskie}). For this endpoint case, when $p$ is close to $1$,  the result of Ben-Efraim--Lust-Piquard~\cite{Efraim} gives the better bounds  
\begin{align}\label{piquard1}
\frac{2}{\pi} (\mathbb{E}|f-\mathbb{E}f|^{p})^{1/p}\leq (\mathbb{E}|\nabla f|^{p})^{1/p} \quad 1\leq p\leq 2, 
\end{align}
and when $p=1$ it is widely believed that the sharp constant in the left hand side of (\ref{piquard1}) should be $\sqrt{2/\pi}$ instead of $2/\pi$ (see Section~\ref{sobolevskie} for more details). 

\bigskip 

We think that the main contribution of the current paper is not just Theorem~\ref{osnovnaya} that we obtain but rather a new {\em duality} approach that we develop between two different classes of extremal problems: square function estimates on the interval $[0,1]$  and gradient estimates on the Hamming cube, and Theorem~\ref{osnovnaya} should be considered as an example. Roughly speaking  one can take a valid estimate for a square function, dualize it by a certain double Legendre transform, and one can write its corresponding  dual estimate on the Hamming cube and vice versa. To illustrate another  example of our duality approach, in Section~\ref{poo} we present a short proof of the following theorem which improves a well--known inequality of Beckner
\begin{theorem}[see \cite{IVVO}]
For any $n\geq 1$, and any $f:\{-1,1\}^{n} \to \mathbb{R}$ we have 
\begin{align*}
\mathbb{E}\,  \Re\,  (f+i|\nabla f|)^{3/2} \leq (\Re\, \mathbb{E} f)^{3/2},
\end{align*}
where $\Re$ denotes the real part, and  $z^{3/2}$  is understood in the sense of principal brunch in the upper half-plane. 
\end{theorem}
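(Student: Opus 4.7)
Plan: The target has the Bellman shape $\mathbb{E}\,B(f,|\nabla f|)\le B(\mathbb{E}f,0)$ with $B(x,y)=\Re(x+iy)^{3/2}$, and the structurally crucial feature of $B$ is its harmonicity $B_{xx}+B_{yy}=0$, inherited from the analyticity of $(x+iy)^{3/2}$ on the upper half-plane. Following the duality philosophy developed earlier in this paper, I would first identify the statement as the Hamming-cube dual of a Wiener-space square-function inequality $\mathbb{E}\,\Re\bigl(M+i\sqrt{\langle M\rangle}\bigr)^{3/2}\le(\mathbb{E}M)^{3/2}$ for a real martingale $M$ — a complex-analytic form of a sharp Beckner-type $3/2$-inequality — and then import the correct Bellman function to the cube via the Legendre-transform bridge that produced (\ref{nasha}).

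To execute the argument on the cube I would induct on $n$. Conditioning on the last coordinate, set $g^{\pm}(y)=f(y,\pm 1)$ for $y\in\{-1,1\}^{n-1}$, together with $h=(g^++g^-)/2$ and $c=(g^+-g^-)/2$, so that on the slice $\{x_n=\varepsilon\}$ we have $|\nabla f|^{2}=|\nabla g^{\varepsilon}|^{2}+c^{2}$. The inductive hypothesis applied to $h$ handles $\mathbb{E}_y\,B(h,|\nabla h|)\le B(\mathbb{E}f,0)$, so everything reduces to the pointwise four-point inequality
\begin{align*}
\tfrac{1}{2}\bigl[B(u,\sqrt{|\alpha|^{2}+p^{2}})+B(v,\sqrt{|\beta|^{2}+p^{2}})\bigr]\le B\!\left(\tfrac{u+v}{2},\tfrac{|\alpha+\beta|}{2}\right),\qquad p=\tfrac{u-v}{2},
\end{align*}
for $u,v\in\mathbb{R}$ and $\alpha,\beta\in\mathbb{R}^{n-1}$. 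Its $\alpha=\beta=0$ slice is precisely the two-point $n=1$ base case $\tfrac12[B(u,|p|)+B(v,|p|)]\le B(\tfrac{u+v}{2},0)$.

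The four-point inequality is the crux and I expect it to be the main obstacle. It is sharp: expanding around the symmetric configuration, the leading $c^{2}$ correction cancels identically thanks to $B_{xx}=-B_{yy}$, so perturbation alone cannot close the argument. The standard Bellman sufficient condition is that the Hessian $D^{2}B$ be non-positive on the admissible cone of increments dictated by the Hamming geometry; writing $B=\Re F$ with $F(z)=z^{3/2}$ and translating the condition into a statement about $|F'|$ and $\arg F'$ reduces matters, after rescaling $(u+v)/2=1$, to verifying monotonicity of the explicit trigonometric function $\Psi(p):=\tfrac12\Re\bigl[(1+p+ip)^{3/2}+(1-p+ip)^{3/2}\bigr]$ in $p\ge 0$. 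Via the polar form $w^{3/2}=|w|^{3/2}e^{3i\arg w/2}$ this becomes a one-variable check, which is exactly the discrete trace of the non-positive Bellman ODE that the duality predicts from the continuous square-function side.
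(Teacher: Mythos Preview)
Your inductive reduction is sound: conditioning on the last coordinate does reduce the theorem to the pointwise four-point inequality you wrote, which (after the triangle inequality and monotonicity of $y\mapsto B(x,y)$, as in Corollary~\ref{selfimpr}) is equivalent to the scalar inequality~(\ref{opa22}) for $M(x,y)=\Re(x+iy)^{3/2}$. That is exactly the Proposition the paper proves in Section~\ref{poo}.

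The gap is in how you propose to verify it. Your function $\Psi(p)=\tfrac12\Re[(1+p+ip)^{3/2}+(1-p+ip)^{3/2}]$ is only the $\alpha=\beta=0$ slice (equivalently $y=b=0$ in~(\ref{opa22})); rescaling $(u+v)/2=1$ kills one parameter by homogeneity, but the full inequality still carries the two free parameters coming from $\alpha,\beta$ (or $y,b$), so it cannot collapse to a one-variable check. The ``standard Bellman sufficient condition'' you invoke---non-positivity of the Hessian on the admissible cone---is here the matrix inequality~(\ref{matrica}), and for this $M$ it \emph{degenerates} (determinant zero, since $M$ solves the homogeneous Monge--Amp\`ere equation). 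As the paper explains around~(\ref{bzik}), the infinitesimal condition does not automatically imply the discrete one when it degenerates; the only $M$ for which both the infinitesimal square-function condition $M_{xx}+M_y/y=0$ and~(\ref{matrica}) hold are the trivial quadratics. So neither the Hessian route nor the one-variable reduction closes the argument. (The paper in fact remarks that the original direct verification in \cite{IVVO} required analyzing roots of several high-degree polynomials.)

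The paper's proof is precisely the Legendre bridge you announce in your plan but then abandon. One writes
\[
\Re(x+iy)^{3/2}=\min_{q\le 0}\sup_{p\ge 0}\Bigl(px+qy-\tfrac{4}{27}(p^{3}-3pq^{2})\Bigr),
\]
checks the minimax hypothesis of Theorem~\ref{hartung}, and then the proof of Lemma~\ref{doska} transfers~(\ref{opa22}) for $M$ to~(\ref{opa02}) for the dual $U(p,q)=-\tfrac{4}{27}(p^{3}-3pq^{2})$. The punchline is that for this cubic $U$, inequality~(\ref{opa02}) is an \emph{identity}: $2U(p,q)=U(p+a,\sqrt{a^{2}+q^{2}})+U(p-a,\sqrt{a^{2}+q^{2}})$ holds with equality for all $p,q,a$. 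So the entire difficulty of the four-point inequality evaporates once you actually pass to the dual, which is the step your execution skipped.
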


\bigskip

  Going back to Theorem~\ref{osnovnaya},  it will be explained  later that $s_{p'}$  in a ``dual'' sense coincides with the sharp constant  found by B.~Davis in the $L^{q}$ norm estimates 
\begin{align}
&s_{q}  \| T^{1/2}\|_{q}\leq \|B_{T}\|_{q}, \quad q\geq 2, \quad \| T^{1/2}\|_{q}<\infty; \label{Davis01}\\
& \| B_{T}\|_{p}\leq s_{p}\|T^{1/2}\|_{p}, \quad 0<p\leq  2.  \label{Davis02}
\end{align}
Here $B_{t}$ is the standard Brownian motion starting at zero, and $T$ is any  stopping time. It was explained in \cite{Davis} that the same sharp estimates  (\ref{Davis01}) and (\ref{Davis02}) hold with $B_{T}$ replaced by an integrable function $g$ on $[0,1]$ with mean zero, and $T^{1/2}$ replaced by  the dyadic square function of $g$.

We notice the essential difference between the Davis estimates (\ref{Davis01}), (\ref{Davis02}) and (\ref{nasha}) that for a given power $p, 1<p\leq 2$, we need the ``dual'' constant  $s_{p'}=s_{\frac{p}{p-1}}$ in the theorem. Besides, inequality (\ref{nasha}) cannot be extended to the full range of exponents $p$ with some finite strictly positive constant $c(p)$ unlike (\ref{Davis01}) and (\ref{Davis02}) (see~\cite{Davis, BUR, BUR01} and (\ref{Latala})).

\section{Proof of the main result}

\subsection{An anonymous Bellman function}\label{anonymous}
In this section we want to define a function  $U: \mathbb{R}^{2} \to \mathbb{R}$ that satisfies  some special properties. 
Let $\alpha \geq  2$ and let  $\beta =\frac{\alpha}{\alpha-1}\leq  2$  be the conjugate exponent of $\alpha$. 
Let  
\begin{align}\label{series}
N_{\alpha }(x):={}_{1}F_{1}\left(-\frac{\alpha }{2}, \frac{1}{2}, \frac{x^{2}}{2} \right) = \sum_{m=0}^{\infty}\frac{(-2x^{2})^{m}}{(2m)!}\frac{\alpha }{2}\left(\frac{\alpha }{2}-1\right)\cdots \left(\frac{\alpha }{2}-m+1 \right)=1-x^{2} \frac{\alpha}{2}+...
\end{align}
be the confluent hypergeometric function.  $N_{\alpha }(x)$ satisfies the Hermite differential equation 
\begin{align}\label{hermit}
N''_{\alpha }(x)-xN'_{\alpha }(x)+\alpha N_{\alpha}(x)=0 \quad \text{for} \quad  x\in \mathbb{R}
\end{align}
with initial conditions $N_{\alpha}(0)=1$ and $N'_{\alpha}(0)=0$. Let $s_{\alpha}$ be the smallest positive zero of $N_{\alpha}$.

Set 
\begin{align*}
u_{\alpha}(x) := 
\begin{cases}
-\dfrac{\alpha s_{\alpha}^{\alpha-1}}{N'_{\alpha}(s_{\alpha})} N_{\alpha}(x), & 0\leq |x|\leq s_{\alpha};\\[10pt]
s_{\alpha}^{\alpha}-|x|^{\alpha}, & s_{\alpha} \leq |x|.
\end{cases}
\end{align*}
Clearly $u_{\alpha}(x)$ is $C^{1}(\mathbb{R}) \cap C^{2}(\mathbb{R}\setminus{\{s_{\alpha}\}})$ smooth even concave function.  The concavity follows from Lemma~\ref{root} and the fact that  $N'_{\alpha}(s_{\alpha}) <0$.  
Finally we define 
\begin{align}\label{upq}
U(p,q) := |q|^{\alpha} u_{\alpha}\left( \frac{p}{|q|}\right) \quad \text{with} \quad  U(p,0)=-|p|^{\alpha}.
\end{align}
For the first time the function $U(p,q)$ appeared in~\cite{Davis}. Later it was also used  in  \cite{Wang, Wang2} in the form  $\widetilde{u}(p,t)=U(p,\sqrt{t})$, $t\geq 0$. It was explained in~\cite{Davis} that $U(p,q)$ satisfies the following properties:
\begin{align}
&U(p,q) \geq |q|^{\alpha} s_{\alpha}^{\alpha} - |p|^{\alpha} \quad \text{for all} \quad \quad (p,q) \in \mathbb{R}^{2}, \quad \text{and when}\;  q=0, \; \text{the equality holds}; \label{obstacle}\\
& 2U(p,q) \geq U(p+a,\sqrt{a^{2}+q^{2}}) + U(p-a, \sqrt{a^{2}+q^{2}})  \quad \text{for all} \quad (p,q,a) \in \mathbb{R}^{3}.\label{neravenstvo}
\end{align}
We should refer to (\ref{obstacle}) as the {\em obstacle condition}, and to  (\ref{neravenstvo}) as the {\em main inequality}. We caution the reader that in \cite{Davis}
one may not find (\ref{neravenstvo}) written explicitly but one will find its infinitesimal form 
\begin{align}\label{infinitesimal}
\widetilde{u}_{t} + \frac{\widetilde{u}_{pp}}{2} \leq  0 \quad \text{for} \quad  \widetilde{u}(p,t) = U(p,\sqrt{t}), 
\end{align}
which follows from the main inequality by expanding it into Taylor's series with respect to $a$ near  $a=0$ and comparing the second order terms. Here  $\widetilde{u}_{pp}$ is defined everywhere except the curve  $|p/\sqrt{t}| = s_{\alpha}$ where $\widetilde{u}$ is only differentiable once.  

In fact,  the reverse implication also holds, i.e., one can derive (\ref{neravenstvo}) from (\ref{infinitesimal}) for this special $U$. This  was done in the PhD thesis of Wang \cite{Wang2} but we will present a short  proof in Section~\ref{heatt}, which partly follows the  Davis argument. Essentially the same argument also appeared later in  \cite{BM} in a slightly different setting. 

The function $U(p,q)$ is essential in obtaining the result in the Davis paper, namely it is used in the proof of (\ref{Davis01}),  and the argument goes as follows. First one shows that  
\begin{align*}
X_{t} = U(B_{t},\sqrt{t}) \quad \text{for} \quad t \geq 0
\end{align*} 
is a supermartingale which is guaranteed by (\ref{infinitesimal}). Finally, by the optional stopping theorem,
\begin{align*}
\mathbb{E}(T^{\frac{\alpha}{2}}s_{\alpha}^{\alpha} - |B_{T}|^{\alpha}) \stackrel{(\ref{obstacle})}{\leq} \mathbb{E}U(B_{T}, \sqrt{T}) \leq U(0,0)=0,
\end{align*}
which yields  (\ref{Davis01}). One may notice that $U(p,q)$ is the minimal function with properties (\ref{obstacle}) and (\ref{neravenstvo}). 

Davis mentions that the proof presented in his paper was suggested by an anonymous referee, and this explains the title of the current section.

\subsection{Dualizing the  Bellman function $U(p,q)$ and  going to the Hamming cube}

Set $\Psi(p,q,x,y):=px+qy+U(p,q)$ for $x \in \mathbb{R}$ and $y \geq 0$. 
We define 
\begin{align}\label{dualbellman}
M(x,y) = \min_{q \leq 0} \sup_{p \in \mathbb{R}} \Psi(p,q,x,y) \quad \text{for} \quad  x \in \mathbb{R}, \; y \geq 0. 
\end{align}
\begin{lemma}\label{milion}
For each $(x,y)\in \mathbb{R}\times \mathbb{R}_{+}$, there exists $(p^{*},q^{*}) = (p^{*}(x,y), q^{*}(x,y))$ such that   
\begin{align}\label{fedja0}
\min_{q \leq 0} \sup_{p \in \mathbb{R}} \Psi(p,q,x,y)=\max_{p \in \mathbb{R}} \inf_{q \leq 0} \Psi(p,q,x,y)=\Psi(p^{*},q^{*},x,y)
\end{align}  
and we have  
\begin{align}\label{sedlo}
\Psi(p,q^{*},x,y) \leq \Psi(p^{*},q^{*},x,y) \leq \Psi(p^{*},q,x,y)\quad  \text{for all} \quad (p,q) \in \mathbb{R}\times \mathbb{R}_{-}. 
\end{align}
\end{lemma}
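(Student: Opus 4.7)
The statement is a saddle-point existence claim: a point $(p^*, q^*)$ satisfying (\ref{sedlo}) automatically forces the two equalities in (\ref{fedja0}), since
\[
\min_{q \leq 0} \sup_p \Psi \;\leq\; \sup_p \Psi(p, q^*, x, y) \;=\; \Psi(p^*, q^*, x, y) \;=\; \inf_{q \leq 0} \Psi(p^*, q, x, y) \;\leq\; \max_p \inf_{q \leq 0} \Psi
\]
together with the trivial max-min $\leq$ min-max inequality. The plan is to construct $(p^*, q^*)$ via the standard variational program: produce $q^*$ as the minimizer of $F(q) := \sup_p \Psi(p, q, x, y)$ on $(-\infty, 0]$ and $p^*$ as the corresponding maximizer.

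For each $q \leq 0$ the supremum defining $F$ is attained: $U(\cdot, q)$ is concave (since $u_\alpha$ is concave), and the explicit formula (\ref{upq}) gives $U(p, q) = s_\alpha^\alpha |q|^\alpha - |p|^\alpha$ on $|p| \geq s_\alpha |q|$, so $\Psi(\cdot, q, x, y) \to -\infty$ as $|p| \to \infty$. The key technical ingredient is convexity of $q \mapsto U(p, q)$ for each fixed $p$. This is trivial on $|p| \geq s_\alpha |q|$, where $U$ is affine in $|q|^\alpha$ (convex for $\alpha \geq 2$). On $|p| \leq s_\alpha |q|$ I would compute $\partial_q^2 U$ from $U = c_\alpha |q|^\alpha N_\alpha(p/|q|)$ with $c_\alpha > 0$, use the Hermite ODE (\ref{hermit}) to eliminate $N''_\alpha$, and reduce, with $s := p/|q|$, to
\[
\partial_q^2 U \;\propto\; \alpha N_\alpha(s)(\alpha - 1 - s^2) \,-\, s N'_\alpha(s)\bigl(2(\alpha-1) - s^2\bigr).
\]
Both summands are nonnegative on $|s| \leq s_\alpha$: indeed $N_\alpha \geq 0$ and $s N'_\alpha(s) \leq 0$ there, and the parenthesized factors are nonnegative thanks to the bound $s_\alpha^2 \leq \alpha - 1$, which is itself forced by evaluating (\ref{infinitesimal}) at the matching curve $|p| = s_\alpha \sqrt t$ (where the computation collapses to $\widetilde u_t + \widetilde u_{pp}/2 \propto s_\alpha^2 - (\alpha - 1)$). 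Since $U$ is globally $C^1$ and convex on each piece, it is convex on $\mathbb{R}$.

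With this in place, $F$ is a supremum of convex functions, hence convex, and the test point $p = 0$ gives $F(q) \geq qy + |q|^\alpha u_\alpha(0)$, which tends to $+\infty$ as $q \to -\infty$ since $u_\alpha(0) > 0$ and $\alpha \geq 2$. So the minimum of $F$ on $(-\infty, 0]$ is attained at some $q^*$; set $p^* := p(q^*)$. The first saddle inequality in (\ref{sedlo}) is then immediate from the definition of $p^*$. For the second, convexity of $U(p^*, \cdot)$ gives the tangent-line estimate $U(p^*, q) - U(p^*, q^*) \geq U_q(p^*, q^*)(q - q^*)$, so
\[
\Psi(p^*, q, x, y) - \Psi(p^*, q^*, x, y) \;\geq\; \bigl(y + U_q(p^*, q^*)\bigr)(q - q^*).
\]
The envelope theorem applied to $F$ yields $y + U_q(p^*, q^*) = 0$ when $q^*$ is interior to $(-\infty, 0)$, and $y + U_q(p^*, 0) \leq 0$ when $q^* = 0$ (from the one-sided optimality $F'(0^-) \leq 0$); in either case the right-hand side is $\geq 0$ because $q - q^* \leq 0$.

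The main obstacle is the convexity claim for $U(p, \cdot)$ and, within it, the sharp estimate $s_\alpha^2 \leq \alpha - 1$; once these are in hand, the rest is routine convex analysis and envelope-theorem bookkeeping.
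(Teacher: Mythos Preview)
Your argument is correct and lands at the same two structural ingredients the paper uses---concavity of $\Psi$ in $p$ and convexity in $q$---but your execution differs in two places.

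First, for the convexity of $q\mapsto U(p,q)$ on the inner region both you and the paper reduce to the sign of
\[
\alpha(\alpha-1)N_\alpha(z)-2(\alpha-1)zN'_\alpha(z)+z^2N''_\alpha(z),\qquad z=|p|/|q|\in[0,s_\alpha],
\]
and both routes end up needing $s_\alpha^2\le\alpha-1$. The paper eliminates $N_\alpha$ via the Hermite ODE to get $-(\alpha-1)zN'_\alpha(z)+(z^2-\alpha+1)N''_\alpha(z)$ and then reads off positivity from Lemma~\ref{root}, which already gives $s_\alpha\le 1$ (hence $s_\alpha^2\le 1\le\alpha-1$) together with $N'_\alpha,N''_\alpha\le 0$. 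You instead eliminate $N''_\alpha$ and pull $s_\alpha^2\le\alpha-1$ out of the heat inequality (\ref{infinitesimal}) evaluated from the outer side of the matching curve. That is legitimate, since (\ref{infinitesimal}) is recorded as a consequence of (\ref{neravenstvo}), but it is a more circuitous justification than simply citing $s_\alpha\le 1$.

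Second, once concavity/convexity are in place the paper just invokes the noncompact minimax theorem (Theorem~\ref{hartung}) with $(p_0,q_0)=(0,0)$, checking the coercivity hypotheses there. You rebuild the relevant special case by hand: attainment of $\sup_p$ by coercivity of $-|p|^\alpha$, coercivity of $F(q)$ from the test point $p=0$, and then the envelope/Danskin identity $F'(q^*)=y+U_q(p^*,q^*)$ combined with the tangent-line inequality to get the second saddle bound. This works (the maximizer $p(q)$ is unique because $u_\alpha$ is strictly concave on $[0,s_\alpha]$ and $-|p|^\alpha$ is strictly concave outside), and it has the virtue of being self-contained; the paper's route is shorter because it outsources exactly this bookkeeping to the cited theorem. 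One small remark: in your boundary case $q^*=0$ you should note that $U_q(p^*,0)=0$, so $F'(0^-)=y\le 0$ forces $y=0$, and the inequality then holds with equality on the right; your phrasing is slightly compressed there but the conclusion stands.
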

\begin{proof}

First let us show that for each fixed $(x,y)$ the function $\Psi(p,q,x,y)$ is convex in $q$ and concave in $p$.  The concavity  in $p$ follows from Lemma~\ref{root}, and the fact that $U$ is even and  $C^{1}$ smooth in $p$. 

To verify  the convexity in  $q$, it is enough to show that the map $q \mapsto U(p,q)$ is convex for $|p|\leq  |q| s_{\alpha} $.  Set $z=\frac{|p|}{|q|}  \in [0, s_{\alpha}]$. Then  we have 
\begin{align*}
&U_{qq} =|q|^{\alpha-2}\left[\alpha(\alpha-1)u_{\alpha}(z) - 2(\alpha-1)z u'_{\alpha}(z) +z^{2}u''_{\alpha}(z)\right] \stackrel{(\ref{hermit})}{=}\\
&|q|^{\alpha-2}\left[-(\alpha-1)z u'_{\alpha}(z) + (z^{2}-\alpha+1)u''_{\alpha}(z) \right].
\end{align*}
Since $u_{\alpha}(z)$ coincides with $N_{\alpha}(z)$ up to a positive constant, the convexity follows from Lemma~\ref{root} and the fact that $\alpha\geq 2$.

Notice that for each $(x,y) \in \mathbb{R}\times \mathbb{R}_{+}$ the map 
\begin{align}\label{fedja}
(p,q) \mapsto px + qy + |q|^{\alpha} u_{\alpha}\left(\frac{p}{|q|}\right)
\end{align}
 satisfies the assumptions of Theorem~\ref{hartung}  where we take $(p_{0}, q_{0})=(0,0)$ (see Section~\ref{minnn} in Appendix). Therefore the conclusions of Lemma~\ref{milion} follow from Theorem~\ref{hartung}. 
 \end{proof}
 
 %In fact one can show that for each $(x,y)\in \mathbb{R}\times\mathbb{R}_{+}$ the saddle point $(p^{*},q^{*})$ at which the value (\ref{fedja0}) is attained is unique (see the discussions after Thereom~\ref{hartung}). Indeed, this follows from the fact that $\Psi(p,q,x,y)$ is strictly  concave in $p$ and strictly convex in $q$. Concavity in $p$ follows from the fact that $p \to u_{\alpha}(p)$ is concave. For $|p|\geq s_{\alpha}$ this is clear. For $|p|\leq  s_{\alpha}$ it follows from Lemma~\ref{root}. Finally $C^{1}$ smoothness implies the concavity in general. 

 %To verify convexity in $q$ notice that it is enough to verify convexity of the map $q \to q^{\alpha}u_{\alpha}(p/q)$ for $q \geq 0$. For the latter  the only interesting case is when $0\leq p/q \leq  s_{\alpha}$.   Using (\ref{hermit}) we obtain  
%\begin{align*}
%\frac{\partial^{2} }{\partial q^{2}}  \, q^{\alpha} u_{\alpha}\left(\frac{p}{q}\right) =-\frac{\alpha s_{\alpha}^{\alpha-1}}{M'_{\alpha}(s_{\alpha})}q^{\alpha-2}\left[-(\alpha-1)t u'_{\alpha}(t)  + (t^{2}-(\alpha-1))u''_{\alpha}(t) \right] 
%\end{align*} 
%where $p/q=t$. The latter expression is nonnegative because $u'_{\alpha}\leq 0$ and  $u''_{\alpha}\leq 0$ on $[0,s_{\alpha}]$, and   $s^{2}_{\alpha}\leq 1\leq  \alpha-1$ by Lemma~\ref{root}. 

\begin{lemma}\label{doska}
For $\beta = \frac{\alpha}{\alpha-1}$,  any $x,a,b \in \mathbb{R}$, and any $y\geq 0$  we have 
\begin{align}
&M(x,y) \geq \left( \frac{\alpha-1}{\alpha^{\beta}}\right)\left(|x|^{\beta} -\frac{y^{\beta}}{s_{\alpha}^{\beta}}\right) \quad \text{and when} \; y=0 \; \text{the equality holds}; \label{obs2}\\
&2 M(x,y) \geq M(x+a, \sqrt{a^{2}+(y+b)^{2}})+M(x-a, \sqrt{a^{2}+(y-b)^{2}}) \,.\label{dualinequality}
\end{align}
\end{lemma}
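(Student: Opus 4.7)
For the obstacle bound (\ref{obs2}), I would substitute (\ref{obstacle}) $U(p,q) \geq s_\alpha^\alpha |q|^\alpha - |p|^\alpha$ into the definition of $M$ and carry out the two one-variable Legendre transforms explicitly: $\sup_p[px-|p|^\alpha] = \frac{\alpha-1}{\alpha^\beta}|x|^\beta$ and $\min_{q \leq 0}[qy + s_\alpha^\alpha |q|^\alpha] = -\frac{\alpha-1}{\alpha^\beta}\cdot\frac{y^\beta}{s_\alpha^\beta}$. Combining gives the desired lower bound, and equality at $y=0$ comes from taking $q=0$ (admissible in the infimum) together with $U(p,0) = -|p|^\alpha$.

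The main inequality (\ref{dualinequality}) is the substantive part. The plan is to exploit the saddle point $(p^*, q^*)$ of $M(x,y)$ provided by Lemma~\ref{milion} together with the main inequality (\ref{neravenstvo}) for $U$. Introduce a free parameter $A \in \mathbb{R}$, set $Q := \sqrt{A^2 + (q^*)^2} \geq 0$, and use the admissible test value $q_\pm = -Q \leq 0$ in the outer infimum defining $M(x \pm a, y_\pm)$. Writing $G(x', Q) := \sup_p [p x' + U(p, Q)]$ (even in $Q$), one obtains
\begin{align*}
M(x+a, y_+) + M(x-a, y_-) \leq -Q(y_+ + y_-) + G(x+a, Q) + G(x-a, Q).
\end{align*}
Rewriting $G(x+a, Q) + G(x-a, Q)$ as a joint supremum over $(p, B)$ via $p_\pm = p \pm B$, and invoking (\ref{neravenstvo}) in the form $U(p+B, Q) + U(p-B, Q) \leq 2 U(p, \sqrt{Q^2 - B^2})$ (valid when $|B| \leq Q$), the problem reduces to bounding $\sup_{|B| \leq Q}[Ba + G(x, \sqrt{Q^2 - B^2})]$.

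The key trick is to choose $A = a|q^*|/y$, which places the critical point of this inner sup precisely at $B = A$: the first-order condition is $a = B\, G_Q(x, \sqrt{Q^2-B^2})/\sqrt{Q^2-B^2}$, and at $B=A$ one has $\sqrt{Q^2 - A^2} = |q^*|$, while the envelope identity $G_Q(x, |q^*|) = y$ coming from the saddle for $M$ reduces the equation to $a = A y/|q^*|$, which is exactly the defining relation for $A$. Substituting back gives
\begin{align*}
M(x+a, y_+) + M(x-a, y_-) - 2M(x, y) \leq 2Aa + 2|q^*|y - Q(y_+ + y_-),
\end{align*}
and the right-hand side is non-positive by combining Cauchy-Schwarz $Aa + |q^*|y \leq Q\sqrt{a^2+y^2}$ with the convexity of $b \mapsto \sqrt{a^2 + (y+b)^2}$, which yields $y_+ + y_- \geq 2\sqrt{a^2 + y^2}$.

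The main obstacle is the careful verification that the constrained supremum in $B \in [-Q, Q]$ is genuinely realized at the interior critical point $B = A$ (rather than on the boundary, or with the unrestricted sup lying in the regime $|B|>Q$ where (\ref{neravenstvo}) does not directly apply). This requires a second-order/concavity check using the specific structure of $U$ and $G$. The degenerate case $y=0$, where the choice $A = a|q^*|/y$ is undefined, is handled separately by a continuity/limiting argument.
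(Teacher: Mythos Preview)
Your treatment of the obstacle bound (\ref{obs2}) matches the paper's.

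For (\ref{dualinequality}) your route diverges, and the obstacle you flag is a genuine gap, not a routine check. After the change of variables $p_\pm=p\pm B$ one has
\[
G(x+a,Q)+G(x-a,Q)=\sup_{p\in\mathbb{R},\,B\in\mathbb{R}}\bigl[\,2px+2Ba+U(p+B,Q)+U(p-B,Q)\,\bigr],
\]
an \emph{unconstrained} supremum in $B$. The bound $U(p+B,Q)+U(p-B,Q)\le 2U(p,\sqrt{Q^2-B^2})$ coming from (\ref{neravenstvo}) is only available on the slab $|B|\le Q$, so it controls the restricted supremum, which may a priori be strictly smaller than the left-hand side. Your envelope computation correctly locates a critical point of the \emph{restricted} problem at $B=A$, but says nothing about the contribution from $|B|>Q$, and there is no evident monotonicity or concavity of $B\mapsto 2Ba+U(p+B,Q)+U(p-B,Q)$ that would confine the unrestricted optimizer to the slab for this particular $U$. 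The degenerate case $y=0$ is a symptom: there $Q=|q^*|$ and the slab offers no room at all.

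The paper sidesteps the issue by reversing which side carries the free parameter. It also invokes the saddle points $(p^{\pm},q^{\pm})$ of $\Psi$ at $(x_\pm,y_\pm)$ and uses (\ref{sedlo}) in the form $M(x_\pm,y_\pm)\le\Psi(p^{\pm},q,x_\pm,y_\pm)$ for \emph{any} $q\le 0$, while on the left it uses $M(x,y)\ge\Psi(p,q^*,x,y)$ for any $p$. With $p^{\pm}$ now fixed numbers rather than variables of a supremum, one takes $p=\tfrac{p^{+}+p^{-}}{2}$ and $q_1=q_2=-\sqrt{\bigl(\tfrac{p^{+}-p^{-}}{2}\bigr)^{2}+(q^{*})^{2}}$; then (\ref{neravenstvo}) applies directly with increment $\tfrac{p^{+}-p^{-}}{2}$ and base level $q^{*}$, and a Cauchy--Schwarz estimate disposes of the linear terms $q_1y_{+}+q_2y_{-}-2q^{*}y$. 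No optimization in $B$ ever appears, so the constraint problem never arises. The fix to your argument is essentially to import these RHS saddle points; your parameter $A=a|q^{*}|/y$ then has no role.
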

The reader notices that   dualization (\ref{dualbellman}) produces   inequality (\ref{dualinequality}) that is  different from (\ref{neravenstvo}).
\begin{proof}
Set 
\begin{align*}
(x_{\pm}, y_{\pm}) := (x\pm a, \sqrt{a^{2}+(y\pm b)^{2}}).
\end{align*}

Lemma~\ref{milion} gives  points  $(p^{*},q^{*})$ and $(p^{\pm}, q^{\pm})$ corresponding to  $(x,y)$ and  $(x_{\pm}, y_{\pm})$. It follows from (\ref{sedlo}) that to prove (\ref{dualinequality}) it would be enough to find numbers  $p \in \mathbb{R}$ and $q_{1}, q_{2} \leq 0$ such that 
\begin{align*}
2\Psi(p,q^{*},x,y) \geq \Psi(p^{+}, q_{1}, x_{+}, y_{+})+ \Psi(p^{-}, q_{2}, x_{-}, y_{-}) .
\end{align*}

The right choice will be 
\begin{align}\label{choose}
p = \frac{p^{+}+p^{-}}{2}\quad \text{and} \quad  q_{1}=q_{2} = - \sqrt{\left(\frac{p^{+}-p^{-}}{2}\right)^{2} + (q^{*})^{2}}\,,
\end{align}
but let us explain it in details. 

%Define
%\begin{align*}
% L(x,q):=\sup_{p} (px+U(p,q)).
%\end{align*}
 %To verify (\ref{dualinequality}) want to show that for any $q\leq 0$ one can find some $q_{1}, q_{2}\leq 0 $ such that 
%\begin{align*}
%2(qy+L(x,q)) \geq q_{1}\sqrt{a^{2}+(y+b)^{2}}+L(x+a,q_{1})+q_{2}\sqrt{a^{2}+(y-b)^{2}}+L(x-a,q_{2})
%\end{align*}
Notice that  by Cauchy--Schwarz we have 
\begin{align*}
q_{1}\sqrt{a^{2}+(y+b)^{2}} + q_{2}\sqrt{a^{2}+(y-b)^{2}} - 2q^{*}y  \leq -|a|\left(\sqrt{q_{1}^{2}-(q^{*})^{2}}+\sqrt{q_{2}^{2}-(q^{*})^{2}} \right)
\end{align*}
provided that $q_{1}, q_{2} \leq q^{*}\leq  0$.  Indeed, we have 
\begin{align*}
&q_{1}\sqrt{a^{2}+(y+b)^{2}} + q_{2}\sqrt{a^{2}+(y-b)^{2}} - 2q^{*}y =\\
&-\sqrt{(q_{1}^{2}-(q^{*})^{2})+(q^{*})^{2}}\, \sqrt{a^{2}+(y+b)^{2}}-\sqrt{(q_{2}^{2}-(q^{*})^{2})+(q^{*})^{2}}\,  \sqrt{a^{2}+(y-b)^{2}} -2q^{*}y \leq \\
&-|a|\sqrt{q_{1}^{2}-(q^{*})^{2}}  - |q^{*}(y+b)|-|a|\sqrt{q_{2}^{2}-(q^{*})^{2}}  - |q^{*}(y-b)|-2q^{*}y\leq \\
&-|a|\left( \sqrt{q_{1}^{2}-(q^{*})^{2}}+\sqrt{q_{2}^{2}-(q^{*})^{2}}\right).
\end{align*}

Denoting $r_{j}^{2}=q_{j}^{2}-(q^{*})^{2}$ for $j=1,2$, we see that it is enough to find $p \in \mathbb{R}$ and $r_{1}, r_{2} \geq 0$ such that 
\begin{align*}
2(px+U(p,q^{*})) \geq  -|a|(r_{1}+r_{2})+p^{+}x_{+}+U\left( p^{+},\sqrt{r_{1}^{2}+(q^{*})^{2}}\right)+p^{-}x_{-}+U\left( p^{-},\sqrt{r_{2}^{2}+(q^{*})^{2}}\right).
\end{align*} 
By choosing  $p=\frac{p^{+}+p^{-}}{2}$, and substituting the values for $x_{\pm}=x\pm a$ we see that it would suffice to find $r_{1}, r_{2}\geq 0$ such that 

\begin{align*}
2U\left(\frac{p^{+}+p^{-}}{2}, q^{*} \right)\geq  -|a|(r_{1}+r_{2})+a(p^{+}-p^{-}) +U\left( p^{+},  \sqrt{r_{1}^{2}+(q^{*})^{2}}\right)+U\left( p^{-},  \sqrt{r_{2}^{2}+(q^{*})^{2}}\right)\,.
\end{align*}
%If  $a(p^{-}-p^{+}) \leq 0$ then we will simply estimate the linear terms by zero i.e., 
%\begin{align*}
%-|a|(r_{1}+r_{2})+a(p^{-}-p^{+}) \leq 0.
%\end{align*}
%Otherwise if $a(p^{-}-p^{+})=|a| |p^{-}-p^{+}|$ then

 We will choose $r_{1}=r_{2} = \frac{|p^{+}-p^{-}|}{2}$. It follows from $-|a||p^{+}-p^{-}|+a(p^{+}-p^{-})\leq 0$ that we only need  to  have  the inequality 
 \begin{align*}
2 U\left(\frac{p^{+}+p^{-}}{2}, q^{*}\right) \geq  U\left(p^{+}, \sqrt{\left( \frac{p^{+}-p^{-}}{2}\right)^{2}+(q^{*})^{2}}\right) + U\left(p^{-}, \sqrt{\left( \frac{p^{+}-p^{-}}{2}\right)^{2}+(q^{*})^{2}}\right).
\end{align*}
But this inequality follows from  (\ref{neravenstvo}). 

\medskip

To verify the obstacle condition (\ref{obs2}), notice that  (\ref{obstacle}) for $U(p,q)$ gives 
\begin{align}\label{erti}
M(x,y) \geq \min_{q\leq 0}  \sup_{p}  \left(px + qy+|q|^{\alpha}s_{\alpha}^{\alpha}-|p|^{\alpha} \right)= \left( \frac{\alpha-1}{\alpha^{\beta}}\right)\left( |x|^{\beta} - \frac{y^{\beta}}{s_{\alpha}^{\beta}} \right).
\end{align}

Finally if $y=0$, then we obtain 
\begin{align*}
M(x,0) =  \max_{p} \inf_{q\leq 0} (px+U(p,q)) \stackrel{(*)}{=} \max_{p} (px+U(p,0))=\sup_{p} (px-|p|^{\alpha}) = \left( \frac{\alpha-1}{\alpha^{\beta}}\right) |x|^{\beta}\,.
\end{align*}
Equality (*) follows from the fact that  
\begin{align*}
q \mapsto px + U(p,q)
\end{align*}
is an even convex map. 
%Indeed, it is clear that the map is even and $C^{1}$ smooth. So it remains to verify convexity in $q$  for $q \geq 0$. For the latter  the only interesting case is when $0\leq -p/q \leq  s_{\alpha}$.   Using (\ref{hermit}) we obtain  
%\begin{align*}
%\frac{\partial^{2} }{\partial q^{2}}  \, |q|^{\alpha} u_{\alpha}\left(\frac{p}{|q|}\right) =-\frac{\alpha s_{\alpha}^{\alpha-1}}{N'_{\alpha}(s_{\alpha})}|q|^{\alpha-2}\left[-(\alpha-1)t u'_{\alpha}(t)  + (t^{2}-(\alpha-1))u''_{\alpha}(t) \right],
%\end{align*} 
%where $-p/q=t\geq 0$. The latter expression is nonnegative because $u'_{\alpha}\leq 0$ and  $u''_{\alpha}\leq 0$ on $[0,s_{\alpha}]$, and   $s^{2}_{\alpha}\leq 1\leq  \alpha-1$ by Lemma~\ref{root}. 

%\begin{align*}
%u(p+\Delta, \Delta^{2}+t)+u(p-\Delta, \Delta^{2}+t) \leq u(p,t). 
%\end{align*}
%for all $t\geq 0$ and all $p, \Delta \in \mathbb{R}$. 
%But this was done in PhD thesis of Wang. However let us illustrate that in fact Davis argument also gives this inequality. 
\end{proof}
\begin{corollary}\label{selfimpr}
For any $a, x \in \mathbb{R}$, all $y, b \in \mathbb{R}^{N}$,  and any $N\geq 1$, we have 
\begin{align}\label{main}
\frac{1}{2}\left(M(x+a, \sqrt{a^{2}+\|y+b\|^{2}})  +M(x-a, \sqrt{a^{2}+\|y-b\|^{2}})\right)\leq M(x, \|y\|).
\end{align}
\end{corollary}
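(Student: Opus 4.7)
The plan is to reduce the $N$-dimensional statement to the one-dimensional inequality (\ref{dualinequality}) proven in Lemma~\ref{doska}, by exploiting the fact that $M$ depends on its second argument only through a scalar magnitude and that everything perpendicular to $y$ can be absorbed into $a$.

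The first step is to establish that $M(x,y)$ is \emph{non-increasing} in $y\geq 0$. This follows directly from the definition (\ref{dualbellman}): for each fixed $q\leq 0$, the map $y\mapsto \sup_{p}(px+qy+U(p,q))$ has slope $q\leq 0$, so it is non-increasing in $y$, and a minimum of non-increasing functions is non-increasing. Next, I would decompose $b=b_{\parallel}+b_{\perp}\in\mathbb{R}^{N}$ with $b_{\parallel}$ parallel to $y$ (choosing an arbitrary unit direction if $y=0$), and set $y_{s}:=\|y\|\geq 0$ and $b_{s}:=\langle b,y\rangle/\|y\|\in \mathbb{R}$ (again taking $b_{s}=0$ if $y=0$). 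The Pythagorean identity then yields
$$\|y\pm b\|^{2}=(y_{s}\pm b_{s})^{2}+\|b_{\perp}\|^{2},$$
so writing $a':=\sqrt{a^{2}+\|b_{\perp}\|^{2}}\geq |a|$ one obtains
$$a^{2}+\|y\pm b\|^{2}=(a')^{2}+(y_{s}\pm b_{s})^{2}.$$

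Now combining the two observations, monotonicity of $M$ in the second slot together with $a'\geq |a|$ gives
$$M\bigl(x\pm a,\sqrt{(a')^{2}+(y_{s}\pm b_{s})^{2}}\bigr)\leq M\bigl(x\pm a,\sqrt{a^{2}+(y_{s}\pm b_{s})^{2}}\bigr).$$
Averaging the $+$ and $-$ versions, the left-hand side of (\ref{main}) is bounded above by
$$\tfrac{1}{2}\bigl(M(x+a,\sqrt{a^{2}+(y_{s}+b_{s})^{2}})+M(x-a,\sqrt{a^{2}+(y_{s}-b_{s})^{2}})\bigr),$$
and a direct application of the scalar inequality (\ref{dualinequality}) to the real numbers $a,b_{s}\in\mathbb{R}$ and $y_{s}\geq 0$ bounds this quantity by $M(x,y_{s})=M(x,\|y\|)$, which is exactly (\ref{main}).

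The only delicate point is the monotonicity of $M(x,\cdot)$, but since this is immediate from the minimax definition with the constraint $q\leq 0$, there is essentially no serious obstacle. All the geometric content is contained in the orthogonal decomposition $b=b_{\parallel}+b_{\perp}$, which reduces an $N$-dimensional configuration to a $2$-dimensional one by absorbing $\|b_{\perp}\|$ into the scalar $a$.
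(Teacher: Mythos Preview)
Your proof is correct. Both your argument and the paper's reduce the $N$-dimensional statement to the scalar inequality (\ref{dualinequality}) together with the monotonicity of $M(x,\cdot)$, but the reductions differ. The paper chooses scalar parameters $\tilde y=\tfrac{1}{2}(\|y+b\|+\|y-b\|)$ and $\tilde b=\tfrac{1}{2}(\|y+b\|-\|y-b\|)$ so that $\tilde y\pm\tilde b=\|y\pm b\|$, applies (\ref{dualinequality}) directly to get the bound $M(x,\tilde y)$, and then uses the triangle inequality $\|y+b\|+\|y-b\|\geq 2\|y\|$ together with monotonicity. You instead split $b$ orthogonally against $y$, absorb $\|b_\perp\|$ into $a$ via Pythagoras, use monotonicity \emph{first} to drop the enlarged $a'$ back to $a$, and then invoke (\ref{dualinequality}). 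The paper's route is marginally shorter (one line of triangle inequality versus an orthogonal decomposition), while yours is perhaps more transparent geometrically in showing exactly where the extra dimensions go; neither has a real advantage over the other.
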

\begin{proof}
It follows from the definition of $M$ that the map $y \mapsto M(x,y)$ is decreasing in $y$ for $y\geq 0$. Therefore by (\ref{dualinequality}) and the triangle inequality we obtain 
\begin{align*}
&\frac{1}{2}\left(M(x+a, \sqrt{a^{2}+\|y+b\|^{2}})  +M(x-a, \sqrt{a^{2}+\|y-b\|^{2}})\right) \leq \\
&M\left(x, \frac{\|y+b\|+\|y-b\|}{2} \right) \leq M(x,\|y\|).
\end{align*}
\end{proof}

The inequality (\ref{main}) gives rise to the estimate 
\begin{align}\label{IV}
\mathbb{E} M(f, |\nabla f|)\leq M(\mathbb{E} f, 0) \quad \text{for all} \quad  f : \{-1,1\}^{n} \to \mathbb{R}. 
\end{align}

Indeed, the reader can find in~\cite{IVVO} the passage from (\ref{main}) to (\ref{IV}). In fact, inequality (\ref{main}) is the same as 
\begin{align}\label{inductive}
\mathbb{E}_{x_{j}} M(f, |\nabla f|) \leq M(\mathbb{E}_{x_{j}}f, |\nabla \mathbb{E}_{x_{j}} f|) \quad \text{for any} \quad f :\{-1,1\}^{n} \to \mathbb{R},
\end{align}
where $\mathbb{E}_{x_{j}}$ takes the average in the coordinate $x_{j}$, i.e., 
\begin{align*}
\mathbb{E}_{x_{j}} f = \frac{1}{2}\left( f\underbrace{(x_{1}, \ldots, 1, \ldots, x_{n})}_{\text{set $1$ on the $j$-th place}}+ f\underbrace{(x_{1}, \ldots, -1, \ldots, x_{n})}_{\text{set $-1$ on the $j$-th place}}\right).
\end{align*}
The rest follows by iterating (\ref{inductive}), the fact  that $\mathbb{E} = \mathbb{E}_{x_{1}}\ldots \mathbb{E}_{x_{n}}$ and $|\nabla \mathbb{E} f | =0$.

\subsection{The proof of Theorem~\ref{osnovnaya}}

%Obstacle condition (\ref{obstacle}) for $U(p,q)$ gives the dualized obstacle condition for   $B$
%\begin{align}\label{erti}
%B(x,y) \geq \inf_{q\leq 0}  \sup_{p}  \left(px + qy+|q|^{\alpha}s_{\alpha}^{\alpha}-|p|^{\alpha} \right)= \left( \frac{\alpha-1}{\alpha^{\beta}}\right)\left( |x|^{\beta} - \frac{y^{\beta}}{s_{\alpha}^{\beta}} \right)
%\end{align}
%We remind that $\beta$ is the  conjugate exponent of $\alpha$. 
%On the other hand we have boundary condition for $B$
%\begin{align}\label{ori}
%B(x,0) =  \sup_{p} \inf_{q\leq 0} (px+U(p,q)) = \sup_{p} (px+U(p,0))=\sup_{p} (px-|p|^{\alpha}) = \left( \frac{\alpha-1}{\alpha^{\beta}}\right) |x|^{\beta}
%\end{align}
We have 
\begin{align*}
\left( \frac{\alpha-1}{\alpha^{\beta}}\right) \mathbb{E}  \left( |f|^{\beta} - \frac{|\nabla f| ^{\beta}}{s_{\alpha}^{\beta}} \right) \stackrel{(\ref{obs2})}{\leq} \mathbb{E} M(f, |\nabla f|)\stackrel{(\ref{IV})}{\leq} M(\mathbb{E} f, 0) \stackrel{(\ref{obs2})}{=}\left( \frac{\alpha-1}{\alpha^{\beta}}\right) |\mathbb{E} f|^{\beta},
\end{align*}
and this gives inequality (\ref{nasha}).

%If $z$ was real then it would be more convenient to write $J_{s}$ in the following probabilistic way 
%\begin{align}\label{probab}
%J_{s}(x,y) = \mathbb{E} h (x+zy+\sqrt{1-z^{2}}B_{s})
%\end{align}
%where $B_{s}$ is the standard Brownian motion starting at $0$. Since $z$ is complex representation (\ref{probab}) does not make sense anymore but it suggests the following identities 
%\begin{align*}
%\frac{\partial }{\partial x} J_{s}(x,y)
%\end{align*}

\section{Remarks and Applications}

\subsection{Going from $U$ to $M$: from Square function to the Hamming cube}

Let $g$ be an integrable function on $[0,1]$. Let $D([0,1])$ denote all dyadic intervals in $[0,1]$. Consider the dyadic martingale $g_{n}$ defined as follows 
\begin{align}\label{ddy}
g_{n}(x) = \sum_{|I|=2^{-n},\, I \in D([0,1])} \langle g \rangle_{I} \mathbbm{1}_{I}(x),
\end{align}
where $\langle g \rangle_{I} = \frac{1}{|I|} \int_{I} g$. The square function $S(g)$ is defined as follows 
\begin{align*}
S(g)(x) = \left(\sum_{n=0}^{\infty} (g_{n+1}(x)-g_{n}(x))^{2}\right)^{1/2}.
\end{align*}
For convenience we always assume that the number of nonzero terms in (\ref{ddy}) is finite so that $S(g)(x)$ makes sense. 
Let $O(p,q)$ be a continuous real valued function, and suppose one wants to estimate the quantity   $\int_{0}^{1} O(g, S(g))$ from above in terms of $\int_{0}^{1}g$.  If one finds a function 
\begin{align}
&U(p,q)\geq O(p,q), \label{opa01}\\
&2U(p,q) \geq U(p+a,\sqrt{a^{2}+q^{2}}) + U(p-a, \sqrt{a^{2}+q^{2}}), \label{opa02}
\end{align}
 then one  obtains (see \cite{Wang}) the bound 
\begin{align*}
\int_{0}^{1} O(g,S(g)) \leq \int_{0}^{1} U(g, S(g)) \leq U\left(\int_{0}^{1} g, 0 \right).
\end{align*}
Conversely, suppose that the  inequality 
\begin{align}\label{hudson}
\int_{0}^{1}O(g, S(g)) \leq F\left( \int_{0}^{1} g\right)
\end{align}
holds for all integrable functions $g$ on $[0,1]$ and some $F$. Then there exists $U(p,q)$ such that the conditions (\ref{opa01}), (\ref{opa02}) are satisfied and $U(p,0)\leq F(p)$. 
%In fact,  given $O(p,q)$, finding  the \textbf{minimal}  $U$ with the properties (\ref{opa01}) and (\ref{opa02})  is necessary and sufficient condition for finding the best possible estimate of $\int O(g,S(g))$  in terms of $\int g$. 
Indeed, consider the extremal problem 
\begin{align}\label{ex1}
U(p,q) = \sup_{g} \left\{\int_{0}^{1} O(g, \sqrt{S(g)^{2}+q^{2}}), \quad \int_{0}^{1}g =p \right\}. 
\end{align}
This $U$ satisfies (\ref{opa01}) (take $g = p$ constant), and, in fact, it  satisfies  (\ref{opa02}). The latter fact can be proved by using the standard Bellman principle (see Chapter 8, \cite{OSE}, and survey ~\cite{NTV}). Besides,  
\begin{align*}
U(p, 0)  =  \sup_{g} \left\{\int_{0}^{1} O(g,S(g)), \quad \int_{0}^{1}g =p \right\} \leq  F(p)
\end{align*}
 because of  (\ref{ex1}). Therefore there is one to one correspondence between the extremal problems for the square function of the form (\ref{ex1}) and the functions $U(p,q)$ with the properties (\ref{opa01}) and (\ref{opa02}). 

The  gradient estimates on the Hamming cube are more subtle. Take any real valued  $\widetilde{O}(x,y)$ and suppose that we want to estimate $\mathbb{E} \widetilde{O}(f, |\nabla f|)$ from above in terms of $\mathbb{E} f$ for any $f : \{-1,1\}^{n} \to \mathbb{R}$ and for all $n \geq 1$. If one finds $M(x,y)$ such that 
\begin{align}
&M(x,y)\geq \widetilde{O}(x,y), \label{opa21}\\
&2 M(x,y) \geq M(x+a, \sqrt{a^{2}+(y+b)^{2}})+M(x-a, \sqrt{a^{2}+(y-b)^{2}}), \label{opa22}
\end{align}
 then\footnote{We do also need to assume that  $y \mapsto M(x,y)$ is decreasing in $y$ for each fixed $x$ to ensure Corollary~\ref{selfimpr}. But if $M$ is $C^{1}$ smooth  then $M_{y}\leq 0$ is guaranteed by (\ref{opa22}). Indeed,  if we take $a=0$ in (\ref{opa22}) we obtain that $y \mapsto M(x,y)$ is concave  for each $x$. Next, taking $y=b=0$ and sending $a \to 0+$, we obtain by Taylor's formula that $M_{y}(x,0)\leq 0$.  Therefore  $M_{y}(x,y)\leq 0$.} one can obtain the estimate (see~\cite{IVVO}) 
\begin{align}\label{PA}
\mathbb{E} \widetilde{O}(f, |\nabla f|)\leq \mathbb{E} M(f, |\nabla f|) \leq M(\mathbb{E} f, 0)\,.
\end{align}

Thus finding such $M$ is sufficient to obtain the estimate but it is unclear whether conditions (\ref{opa21}) and (\ref{opa22}) are \textbf{necessary} to obtain the bound $\mathbb{E} \widetilde{O}(f, |\nabla f|)\leq M(\mathbb{E} f, 0)$. In other words we do not know what is the corresponding extremal problem for $M$, i.e., what is the right Bellman function $M$. The reason lies in the fact that  there is an essential difference between the Hamming cube and the dyadic intervals, i.e., test functions do not concatenate in a good way on $\{-1,1\}^{n}$ as it happens for dyadic martingales. 

Now we formulate an abstract theorem that formalizes our duality principle  in a general setting. 
\begin{theorem}
Let $I, J \subseteq \mathbb{R}$ be convex sets. Take an arbitrary  $O(p,q) \in C(I\times\mathbb{R}_{+})$, and let $U (p,q) : I\times \mathbb{R}_{+} \to \mathbb{R}$ satisfy properties (\ref{opa01}) and (\ref{opa02}). Assume that for each $(x,y) \in J\times \mathbb{R}_{+}$, we have 
%the map $(p,q) \mapsto px+qy+U(p,|q|)$ has a saddle point $(p^{*}(x,y), q^{*}(x,y))$ such that 
\begin{align*}
\min_{q\leq 0} \sup_{p\in I}\,  px+qy+U(p,|q|) = \max_{p \in I} \inf_{q\leq 0}\,  px+qy+U(p,|q|). %= p^{*}x+q^{*}y+U(p^{*},|q^{*}|). 
\end{align*}
 Then  $M$ and $\widetilde{O}$ defined as  
\begin{align}
&M(x,y) = \min_{q\leq 0} \sup_{p \in I} \; \left( px +qy + U(p,|q|) \right), \label{realdual}\\
&\widetilde{O}(x,y) = \inf_{q\leq 0} \sup_{p \in I} \; \left( px +qy + O(p,|q|) \right) \label{oobs}
\end{align}
satisfy (\ref{opa22}) and (\ref{opa21}), and, thereby, (\ref{PA}) for any $f :\{-1,1\}^{n} \to J$ and any $n \geq 1$.   
\begin{align*}
\end{align*}
\end{theorem}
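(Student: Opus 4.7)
The plan is to abstract the structure of Lemma~\ref{doska}. Write $\Psi_{U}(p,q,x,y) = px+qy+U(p,|q|)$ and $\Psi_{O}(p,q,x,y) = px+qy+O(p,|q|)$. The obstacle condition (\ref{opa21}) is essentially free from (\ref{opa01}): for every fixed $q\leq 0$, $U\geq O$ gives $\sup_{p\in I}\Psi_{U}(p,q,x,y)\geq\sup_{p\in I}\Psi_{O}(p,q,x,y)$, so $\min_{q\leq 0}$ of the left dominates $\inf_{q\leq 0}$ of the right, yielding $M(x,y)\geq\widetilde{O}(x,y)$.

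The main work is (\ref{opa22}). The minimax hypothesis, whose attainment is built into the use of $\min$ and $\max$, provides at each of the three points $(x,y)$, $(x_{+},y_{+}):=(x+a,\sqrt{a^{2}+(y+b)^{2}})$, $(x_{-},y_{-}):=(x-a,\sqrt{a^{2}+(y-b)^{2}})$, saddle pairs $(p^{*},q^{*}),(p^{+},q^{+}),(p^{-},q^{-})\in I\times\mathbb{R}_{-}$ at which the respective values of $M$ are attained. The saddle-point property then gives $M(x,y)\geq\Psi_{U}(p,q^{*},x,y)$ for every $p\in I$ and $M(x_{\pm},y_{\pm})\leq\Psi_{U}(p^{\pm},q,x_{\pm},y_{\pm})$ for every $q\leq 0$. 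So it suffices to exhibit $p\in I$ and $q_{1},q_{2}\leq 0$ with
\[
2\Psi_{U}(p,q^{*},x,y)\;\geq\;\Psi_{U}(p^{+},q_{1},x_{+},y_{+})+\Psi_{U}(p^{-},q_{2},x_{-},y_{-}).
\]

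The choice is exactly the one that drives Lemma~\ref{doska}: $p=(p^{+}+p^{-})/2$, which lies in $I$ by convexity of $I$, and $q_{1}=q_{2}=-\sqrt{((p^{+}-p^{-})/2)^{2}+(q^{*})^{2}}$, so $q_{j}\leq q^{*}\leq 0$. The Cauchy--Schwarz computation reproduced verbatim from Lemma~\ref{doska} handles the linear $qy$ terms (producing a $-|a|\,|p^{+}-p^{-}|$ bound that cancels the cross term $a(p^{+}-p^{-})$), and the residue reduces to
\[
2U\!\left(\tfrac{p^{+}+p^{-}}{2},\,|q^{*}|\right)\;\geq\;U\!\left(p^{+},\sqrt{\bigl(\tfrac{p^{+}-p^{-}}{2}\bigr)^{2}+(q^{*})^{2}}\right)+U\!\left(p^{-},\sqrt{\bigl(\tfrac{p^{+}-p^{-}}{2}\bigr)^{2}+(q^{*})^{2}}\right),
\]
which is precisely hypothesis (\ref{opa02}) applied with $(p^{+}+p^{-})/2$ and $(p^{+}-p^{-})/2$ in the roles of the center and the shift. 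This establishes (\ref{opa22}).

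The principal potential obstacle is conceptual: one must justify that the abstract minimax equality yields an actual saddle pair at each point. The stated use of $\min$ and $\max$ in the hypothesis (rather than $\inf$ and $\sup$) guarantees attainment, and the standard argument then shows that if $q^{*}$ attains the outer $\min$ and $p^{*}$ attains the outer $\max$ then $(p^{*},q^{*})$ is in fact a saddle point, so no $\varepsilon$-approximation is required. Once the three saddle pairs are in hand, the remainder is the algebraic manipulation from Lemma~\ref{doska}, and the consequence (\ref{PA}) follows from (\ref{opa21})--(\ref{opa22}) via Corollary~\ref{selfimpr} and the passage (\ref{main})--(\ref{IV}), where monotonicity of $y\mapsto M(x,y)$ (needed in the triangle-inequality step) is itself a consequence of (\ref{opa22}) as indicated in the footnote following (\ref{opa22}).
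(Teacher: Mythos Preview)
Your proposal is correct and follows essentially the same approach as the paper: invoke the saddle-point characterization of the minimax equality (the paper cites Lemma~\ref{magari2} for this), then replay the Cauchy--Schwarz and midpoint choices from Lemma~\ref{doska}, using convexity of $I$ to keep $p=(p^{+}+p^{-})/2$ inside $I$. One small remark: for the monotonicity of $y\mapsto M(x,y)$ needed in Corollary~\ref{selfimpr}, the footnote argument you cite assumes $C^{1}$ smoothness, whereas in the abstract setting it is cleaner to read monotonicity directly from the definition of $M$ (since $qy$ is nonincreasing in $y$ for each $q\leq 0$), as the paper does in the proof of Corollary~\ref{selfimpr}.
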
 
One may think that finding $U(p,q)$ with the property (\ref{opa02}) is a difficult problem. Let us make a quick remark here that if it happens that  $t \mapsto U(p,\sqrt{t})$ is convex for each fixed $p\in I$, then (\ref{opa02}) is automatically implied by its infinitesimal form, i.e., by $U_{pp}+U_{q}/q\leq 0$ (see the proof of Lemma~\ref{barko}).
\begin{proof}
The proof essentially repeats the proof of Lemma~\ref{doska}. Let us sketch the argument. Define $\Psi(p,q,x,y):=px+qy+U(p,|q|)$. The existence of a {\em saddle} point $(p^{*}, q^{*})$ with properties  (\ref{fedja0}) and (\ref{sedlo}) is guaranteed by Lemma~\ref{magari2}. The convexity of the set $I$ allows us to choose $p$ from $I$, and $q_{1}, q_{2} \in (-\infty, 0]$ according to (\ref{choose}). The rest of the proof of the theorem is the same as in Lemma~\ref{doska}. Inequality (\ref{oobs}) follows from  (\ref{opa01}). Convexity of $J$ is needed, for example, to ensure that if $f : \{-1,1\}^{n} \to J$,  then $\mathbb{E} f \in J$, so that (\ref{PA}) makes sense. 
\end{proof}

\subsection{Going from $M$ to $U$: from Hamming cube to square function}
Another interesting observation is that equality (\ref{realdual}) was lurking in a solution of a certain Monge--Amp\`ere equation. For example, taking $a,b \to 0$ in  (\ref{opa22}),  and using the Taylor's series expansion (assuming that $M$ is smooth enough) one obtains 
\begin{align}\label{matrica}
\begin{pmatrix}
M_{xx}+\frac{M_{y}}{y} & M_{xy}\\
M_{xy} & M_{yy}
\end{pmatrix} \leq 0\,.
\end{align}
When  looking for the least function $M$ with $M \geq \widetilde{O}$ and (\ref{matrica}), it is reasonable to assume that  condition (\ref{matrica}) should degenerate except, possibly, on the set where $M$ coincides with its obstacle $\widetilde{O}$. The degeneracy of  (\ref{matrica})  means that the determinant of the matrix in  (\ref{matrica}) is zero. This is a general Monge--Amp\`ere type equation and,  after an application of the exterior differential systems of Bryant--Griffiths (see \cite{IVVO1}), we obtain that the solutions can be locally  characterized as follows: 
\begin{align}
&x=-U_{p}(p,q), \nonumber\\
&y=-U_{q}(p,q), \nonumber\\
&M(x,y)=px+qy+U(p,q), \label{bryant}
\end{align}
where $U$ satisfies the equation 
\begin{align}\label{heat}
U_{pp}+\frac{U_{q}}{q}=0.
\end{align}
In~\cite{IVVO1} we used $u(p,t)=-U(p,\sqrt{2t})$ instead of $U(p,q)$, in which case (\ref{heat}) becomes just the backward heat equation for $u(p,t)$. 
%$In fact, this  $U$ happens to coincide with the function defined in (\ref{ex1}). For example, notice that (\ref{heat}) is degenerate infinitesimal form of (\ref{opa02}).
 We will not formulate a  formal statement  but we do make a remark that such a  reasoning allows us to guess the {\em dual} of $M$, i.e., to find $U$ given $M$.  The way  this guess works will be illustrated in Section~\ref{poo}.

\medskip

Our final remark is that one may try to use  $U(p,q) :=M(p,q)$ with $O(p,q):=\widetilde{O}(p,q)$ because (\ref{opa22}) clearly implies (\ref{opa02}). It will definitely give some estimate for the square function but not the sharp one. Indeed, for the sharp estimates,   condition (\ref{opa02}) for $U$ usually degenerates, namely (\ref{heat}) holds. On the other hand, if $M_{xx}+M_{y}/y=0$ and (\ref{matrica}) holds, then $M_{xy}=0$, and  
\begin{align}\label{bzik}
M(x,y)=C(x^{2}-y^{2})+Dx+Q
\end{align}
 for some constants $C, D, Q\in \mathbb{R}$. This family of functions corresponds to the trivial inequality $\int_{0}^{1}S(g)^{2} \leq \int_{0}^{1} g^{2}$.
  Analogously, the best possible function $U$ satisfying  (\ref{opa01}) and (\ref{opa02}) will almost never satisfy (\ref{matrica}) except for a very particular case when $U(p,q)=C(p^{2}-q^{2})+Dp+Q$.  

\subsection{The dual to Log-Sobolev is Chang--Wilson--Wolff}
The function $M(x,y)=x\ln x - \frac{y^{2}}{2x}$ satisfies (\ref{matrica}) and, therefore, it  gives the log-Sobolev inequality~\cite{IVVO1}. Its dual in the sense of (\ref{bryant}) is $U(p,q)=e^{p-q^{2}/2}$ (see Section 3.1.1 in~\cite{IVVO1} where $t=q^{2}/2$). Notice that for this  $U$, inequality  (\ref{opa02}) simplifies to  
\begin{align*}
2e^{a^{2}/2}\geq e^{a}+e^{-a},
\end{align*}
which is true since $(2k)! \geq 2^{k} k!$ for $k \geq 0$. Therefore we obtain 
\begin{corollary}
For any integrable $g$ on $[0,1]$, we have
\begin{align*}
\int_{0}^{1} \exp\left( g-\frac{S^{2}(g)}{2}\right) \leq \exp\left( \int_{0}^{1} g\right).
\end{align*}
\end{corollary}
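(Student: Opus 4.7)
The plan is to apply the general framework described in the subsection on square functions: exhibit a single function $U(p,q)$ that majorizes the obstacle $O(p,q) := e^{p - q^2/2}$ and satisfies the main inequality (\ref{opa02}); then the desired bound is an immediate consequence of the displayed implication
\[
\int_{0}^{1} O(g,S(g)) \leq \int_{0}^{1} U(g,S(g)) \leq U\!\left(\int_{0}^{1} g,\, 0\right).
\]
The natural candidate, suggested by the preceding discussion of the log-Sobolev/Chang--Wilson--Wolff duality, is $U(p,q) := e^{p-q^2/2}$ itself, so that (\ref{opa01}) holds with equality and the right-hand side at $q=0$ is exactly $\exp(\int_0^1 g)$.

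The only nontrivial step is verifying (\ref{opa02}) for this choice. Writing out the inequality
\[
2e^{p-q^{2}/2} \geq e^{(p+a)-(a^{2}+q^{2})/2} + e^{(p-a)-(a^{2}+q^{2})/2}
\]
and dividing through by $e^{p-q^2/2}$ reduces it to the single-variable assertion
\[
2 e^{a^{2}/2} \geq e^{a}+e^{-a} \qquad (a \in \mathbb{R}),
\]
which is independent of $p$ and $q$. I would verify this elementary inequality by comparing Taylor coefficients: $2e^{a^{2}/2} = 2\sum_{k\ge 0} \frac{a^{2k}}{2^{k}k!}$ whereas $e^{a}+e^{-a} = 2\sum_{k\ge 0}\frac{a^{2k}}{(2k)!}$, so the claim reduces term-by-term to $(2k)! \geq 2^{k}k!$, which follows by grouping factors of $(2k)!$ as $\prod_{j=1}^{k}(2j)(2j-1) \geq \prod_{j=1}^{k} 2j = 2^{k}k!$.

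Once (\ref{opa02}) is established, the chain of inequalities recalled above is already justified in the paper (it is the ``if one finds'' bullet in the Going from $U$ to $M$ subsection), and evaluating at $q=0$ yields
\[
\int_{0}^{1} \exp\!\left(g - \tfrac{S(g)^{2}}{2}\right) \leq \exp\!\left(\int_{0}^{1}g\right).
\]
The main obstacle is therefore only the one-line Taylor-coefficient comparison; there is no genuine analytic difficulty, which is consistent with the paper's remark that this corollary is the ``dual'' to a log-Sobolev inequality whose hard work has already been done in identifying the correct $U$.
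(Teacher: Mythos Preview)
Your proposal is correct and follows essentially the same argument as the paper: take $U(p,q)=O(p,q)=e^{p-q^{2}/2}$, reduce (\ref{opa02}) to $2e^{a^{2}/2}\geq e^{a}+e^{-a}$, and verify this by the termwise comparison $(2k)!\geq 2^{k}k!$. The paper presents exactly this reduction and factorial inequality in the paragraph immediately preceding the corollary.
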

This corollary immediately recovers the result of Chang-Wilson-Wolf~\cite{CWW} well-known to probabilists, namely for any $g$ with $\int_{0}^{1} g=0$ and $\|S(g)\|_{\infty}<\infty$, we have 
\begin{align}\label{suparupa}
\int_{0}^{1} e^{g} \leq e^{\|S(g)\|^{2}_{\infty}/2}.
\end{align}
Next, repeating a standard argument, namely, considering $tg$ and applying Chebyshev's inequality (see Theorem~3.1 in \cite{CWW}), one obtains the superexponential bound 
\begin{align}\label{Wolff}
| \{ x \in [0,1]\, :\,  g(x) - \int_{0}^{1}g \geq \lambda \} | \leq e^{-\frac{1}{2} \lambda^{2}/\|Sg\|_{\infty}^{2}}
\end{align}
for any $\lambda \geq 0$.

We should remind that the log-Sobolev inequality via the Herbst argument~\cite{Ledoux} gives Gaussian concentration inequalities, namely, 
\begin{align}\label{sup2}
\gamma\left(x \in \mathbb{R}^{n}\, : f(x)-\int_{\mathbb{R}^{n}} f d\gamma \geq \lambda  \right) \leq e^{-\frac{1}{2} \lambda^{2}/\|\nabla f\|_{\infty}^{2}}
\end{align}
for any $\lambda \geq 0$ and any smooth  $f: \mathbb{R}^{n} \to \mathbb{R}$ with $\|\nabla f\|_{\infty}<\infty$. Here  $\gamma$ is the standard Gaussian measure on $\mathbb{R}^{n}$. 

In other words we just illustrated that estimates (\ref{sup2}) and (\ref{Wolff}) are dual to each other in the sense of duality between functions $M=x\ln x - \frac{y^{2}}{2x}$ and $U=e^{p-q^{2}/2}$. 

\subsection{Poincar\'e inequality 3/2: a simple proof via duality}\label{poo}
It was proved in \cite{IVVO} that for any $f :\{-1,1\}^{n} \to \mathbb{R}$, we have 
\begin{align}\label{poincare}
\mathbb{E}\,  \Re \, (f+i|\nabla f|)^{3/2} \leq \Re (\mathbb{E} f)^{3/2},
\end{align}
where $z^{3/2}$ is taken in the sense of the principal brunch in the upper half-plane.  Inequality (\ref{poincare}) improves Beckner's bound for a particular exponent~\cite{IVVO}. Consider  
\begin{align*}
M(x,y) = \Re (x+iy)^{3/2} = \frac{1}{\sqrt{2}} (2x - \sqrt{x^{2}+y^{2}}) \sqrt{\sqrt{x^{2}+y^{2}}+x} \quad \text{for} \quad  (x,y)\in \mathbb{R}^{2}.
\end{align*}
It was explained in \cite{IVVO} that to prove (\ref{poincare}) it is enough to check that $M(x,y)$ satisfies (\ref{opa22}), and the latter fact involved careful investigation of the roots of several  very high degree polynomials with integer coefficients. Let us give a simple proof of (\ref{opa22}) using our duality technique. 
\begin{prop}
The function $M(x,y)= \Re (x+iy)^{3/2}$ satisfies (\ref{opa22}) for all $x,a,b \in \mathbb{R}$ and $y\geq 0$. 
\end{prop}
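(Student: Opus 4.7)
The plan is to apply the duality principle developed in the preceding sections. Specifically, I will exhibit a function $U:[0,\infty)\times[0,\infty)\to\mathbb{R}$ satisfying the square-function inequality \eqref{opa02}, and then verify that $M(x,y)=\Re(x+iy)^{3/2}$ is realized as its Legendre-type dual
\[
M(x,y)\;=\;\min_{s\ge 0}\,\sup_{p\ge 0}\,\bigl(\,px-sy+U(p,s)\,\bigr),\qquad x\in\mathbb{R},\;y\ge 0.
\]
Once this is in place, the argument of Lemma~\ref{doska}, with the convex set $I=[0,\infty)$ in place of $\mathbb{R}$, transfers \eqref{opa02} for $U$ into \eqref{opa22} for $M$ verbatim; the only bookkeeping is to notice that the midpoint $(p^++p^-)/2$ of two non-negative optima is still non-negative.

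Guided by the Monge--Amp\`ere heuristic \eqref{bryant}--\eqref{heat}, I expect $U$ to be homogeneous of degree three and to solve the backward-heat equation $U_{pp}+U_s/s=0$. The simplest cubic with this property is
\[
U(p,s)\;:=\;\tfrac{4}{27}\,p\,(3s^2-p^2)\;=\;\tfrac{4}{9}\,ps^2-\tfrac{4}{27}\,p^3,
\]
for which $U_{pp}=-\tfrac{8p}{9}$ and $U_s/s=\tfrac{8p}{9}$ do cancel. For this $U$, inequality \eqref{opa02} in fact holds as an \emph{equality}: using the identity $(p+a)^3+(p-a)^3=2p^3+6pa^2$ one computes
\[
U(p+a,\sqrt{a^2+s^2})+U(p-a,\sqrt{a^2+s^2})\;=\;\tfrac{8p(a^2+s^2)}{9}-\tfrac{8(p^3+3pa^2)}{27}\;=\;2U(p,s).
\]

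To identify $M$ as the dual of this $U$, observe that $\Psi(p,s,x,y):=px-sy+U(p,s)$ is concave in $p$ (since $U_{pp}\le 0$ on $p\ge 0$) and convex in $s$ (since $U_{ss}=\tfrac{8p}{9}\ge 0$) on $[0,\infty)^2$, so the min-sup is attained at an interior saddle whenever one exists. The critical-point equations $\partial_p\Psi=0$ and $\partial_s\Psi=0$ read $x=\tfrac{4}{9}(p^2-s^2)$ and $y=\tfrac{8}{9}\,ps$; equivalently, $x+iy=\tfrac{4}{9}(p+is)^2$, so $\sqrt{x+iy}=\tfrac{2}{3}(p+is)$ in the principal branch of the upper half-plane. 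Since $U$ is homogeneous of degree three, Euler's identity $pU_p+sU_s=3U$ reduces the saddle value to $-2U(p,s)=\tfrac{8}{27}\Re(p+is)^3=\Re(x+iy)^{3/2}$, as required. A short check of the boundary cases $y=0$ and of the regime $x\le-\tfrac{4}{9}s^2$ where the constraint $p\ge 0$ becomes active recovers $M(x,0)=(x_+)^{3/2}=\Re(x+i0)^{3/2}$. The only real obstacle in the argument is guessing the correct $U$; once the cubic above is written down, every remaining step is a routine computation, and in particular no analysis of roots of high-degree polynomials (as in the original proof in \cite{IVVO}) is needed.
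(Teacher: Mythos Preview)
Your proof is correct and follows essentially the same route as the paper: you use the identical cubic $U(p,s)=-\tfrac{4}{27}(p^{3}-3ps^{2})$, verify that \eqref{opa02} holds as an equality for this $U$, exhibit $M=\Re(x+iy)^{3/2}$ as the $\min\sup$ dual, and then invoke the argument of Lemma~\ref{doska} on the half-line $I=[0,\infty)$. The only cosmetic differences are your substitution $s=-q$ and the explicit use of Euler's identity to compute the saddle value, where the paper simply says the $\min\sup$ identity ``can be directly checked''; to be fully rigorous you should also note (as the paper does via Theorem~\ref{hartung}) that the concave--convex structure together with the coercivity of $\Psi$ guarantees the existence of the required saddle points for \emph{every} $(x,y)$, not just those where the interior critical-point equations are solvable.
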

\begin{proof}
 $M(x,y)$ is a solution of the  homogeneous Monge--Amp\`ere equation (\ref{matrica}), and therefore it has a representation of the form (\ref{bryant}) (see Section~3.1.4 in \cite{IVVO1}):
\begin{align*}
&x=-U_{p}(p,q);\\
&y=-U_{q}(p,q);\\
&U(p,q)=- \frac{4}{27}(p^{3}-3pq^{2});\\
&M(x,y)=px+qy+U(p,q).
\end{align*}
This leads us to the following guess
\begin{align*}
\frac{1}{\sqrt{2}} (2x - \sqrt{x^{2}+y^{2}}) \sqrt{\sqrt{x^{2}+y^{2}}+x}=\min_{q\leq 0} \sup_{p\geq 0}\left(  xp + qy - \frac{4}{27}(p^{3}-3pq^{2}) \right),
\end{align*}
which can be directly  checked.  Using Theorem~\ref{hartung} with $(p_{0},q_{0})=(0,0)$, and following the proof of Lemma~\ref{doska}, it is enough to check that $U(p,q)$ satisfies (\ref{opa02}). Notice that (\ref{opa02}) is an identity for $U(p,q)= -\frac{4}{27}(p^{3}-3pq^{2})$. This finishes the proof of the proposition. 
\end{proof}

\subsection{Sobolev inequalities}\label{sobolevskie}
\subsubsection{The Hamming cube $\{-1,1\}^{n}$}
For $p \in [1,2]$, let  $c_{p}$ be the best possible constant such that  
\begin{align}\label{sobham}
c_{p}(\mathbb{E}|f|^{p} - |\mathbb{E} f|^{p}) \leq \mathbb{E} |\nabla f|^{p} \quad \text{for all functions} \quad  f:\{-1,1\}^{n} \to \mathbb{R}.
\end{align}
Our theorem implies that $c_{p} \geq s^{p}_{p'}$ for $p\in (1,2]$.  Notice that when $p=2$, we have $c_{2}=s_{2}^{2}=1$, and (\ref{sobham}) recovers the classical Poincar\'e inequality. When $p \to 1+$ the constant $s^{p}_{p'}$ tends to zero which should not be the case for $c_{p}$. Indeed, it follows from a deep result of Talagrand~\cite{Talagrand} that if $T_{p}$ is the best possible constant in the following estimate
 \begin{align}\label{Ramonn}
 T_{p}\,  \mathbb{E} |f-\mathbb{E} f|^{p} \leq \mathbb{E} |\nabla f|^{p}  \quad \text{for all} \quad  f:\{-1,1\}^{n} \to \mathbb{R},
 \end{align}
then $T_{p}>0$ for all $p \in [1, \infty)$. Now notice that $T_{1}=c_{1}$, $T_{2}=c_{2}$ and $T_{p} \geq c_{p}$ for $p\in (1, 2)$. When $p>2$,  by example (\ref{Latala}), we must have $c_{p}=0$ unlike the fact that $T_{p}>0$ for $p>2$. So one may wonder whether the  positivity of $T_{p}$  may not imply the positivity of $c_{p}$ on the interval $(1,2)$. Let us mention that this is not the case, in fact $2c_{p} \geq T_{p}$ for $p\in (1,2)$. Indeed,  it will suffice to prove that  
$2\mathbb{E} |f-\mathbb{E}f|^{p} \geq \mathbb{E}|f|^{p} - |\mathbb{E} f|^{p}$. If $\mathbb{E}f=0$ this is obvious. Assume $\mathbb{E}f\neq 0$. Next, we show a simple inequality 
\begin{align}\label{ramon}
2|x-1|^{p}-|x|^{p}+1 \geq p(1-x) \quad \text{for all} \quad1\leq p\leq 2 \quad \text{and} \quad   x \in \mathbb{R}. 
\end{align}
Plugging $x= f/\mathbb{E} f$,  and taking the expectation,  we obtain $2\mathbb{E} |f-\mathbb{E}f|^{p} \geq \mathbb{E}|f|^{p} - |\mathbb{E} f|^{p}$.  To verify (\ref{ramon}), without loss of generality assume that $p>1$ (otherwise the inequality is trivial). Consider  $g(x)=2|x-1|^{p}-|x|^{p}+1$. Its second derivative changes signs at points $x$ which satisfy the equation $|x-1|=2^{1/(2-p)}|x|$, i.e., when $x=x_{\pm}=\frac{1}{1\pm 2^{1/(2-p)}}$. The right hand side of (\ref{ramon}) represents the tangent line to the graph of $g$ at the point $x=1$. Clearly $g$ is convex on 
$[ x_{+}, \infty)$. Therefore (\ref{ramon}) is true on this interval. Next,  $g$ is concave on $[x_{-}, x_{+}]$ and since $x_{-}<0$, we have $g\geq p(1-x)$  on $[0,x_{+}]$ because $g(0)>p(1-0)$.  Thus (\ref{ramon}) is true for $x\geq 0$. For $x \leq 0$, by Bernoulli we have 
\begin{align*}
2|x-1|^{p}-|x|^{p}+1 - p(1-x) \geq |x-1|^{p}+1 - p(1-x)  \geq 1-px +1-p(1-x)=2-p \geq 0.
\end{align*}

To the best of our knowledge, the constants $c_{p}, T_{p}$ are unknown for $p \in [1,2)$. There is  a remarkable result of Ben-Efraim--Lust-Piquard~\cite{Efraim} that $T_{p}\geq \frac{2}{\pi}$ for $1\leq p\leq 2$. 

This, combined to our theorem,  gives the lower bound $T_{p} \geq \max\{\frac{2}{\pi}, s_{p'}\}$ for $1\leq p \leq 2$.  However, due to the inequalities of Bobkov--G\"otze and   Maurey--Pisier (see the next section),  it is widely believed that $c_{1}=T_{1}=\sqrt{\frac{2}{\pi}}$.  

An elegant  idea of Naor--Schechtman~\cite{NAOR}  based on  Burkholder's inequality~\cite{BUR02} gives an estimate
\begin{align*}
(p-1)^{p}\,  \mathbb{E}|f-\mathbb{E} f|^{p} \leq  \mathbb{E}_{x} \mathbb{E}_{x'} \left|\sum_{j=1}^{n} x'_{j} \partial_{j}f(x) \right|^{p} \quad 1<p\leq 2. 
\end{align*}
Let us show that our bound (\ref{assaf1}) obtained in Corollary~\ref{asebi} is better. 
\begin{prop}\label{comp}
For all $1<p<2$ we have 
\begin{align}\label{gama}
s^{p}_{p'}  2^{\frac{p-2}{2}} \min\left(1,\frac{\Gamma((p+1)/2)}{\Gamma(3/2)} \right) > (p-1)^{p} \quad \text{for} \quad 1<p<2. 
\end{align}
\end{prop}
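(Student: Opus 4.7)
The plan is to recognize
$A_p:=\bigl(2^{(p-2)/2}\min(1,\Gamma((p+1)/2)/\Gamma(3/2))\bigr)^{1/p}$
as Haagerup's sharp constant in the real Khintchin inequality. Since $\Gamma(3/2)=\sqrt{\pi}/2$ and $\|Z\|_p^p=2^{p/2}\Gamma((p+1)/2)/\sqrt{\pi}$ for $Z\sim\mathcal N(0,1)$, this is just $A_p=\min\bigl(2^{1/2-1/p},\,\|Z\|_p\bigr)$, and inequality (\ref{gama}) rewrites as $s_{p'}A_p>p-1$. First, I invoke Lemma~\ref{smalllest} to substitute $s_{p'}\geq\sqrt{2(p-1)/p}$, a bound that is strict on $(1,2)$ and tight only as $p\to 2$. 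Multiplying through, the proposition reduces to proving $A_p\geq\sqrt{p(p-1)/2}$ on $[1,2]$; any equality at $p=2$ is harmless, since the strict gap from the Lemma upgrades the final product inequality to strict on $(1,2)$.

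Since $A_p$ is a minimum, the remaining task splits into two one-variable inequalities on $[1,2]$, both tight only at $p=2$:
\[ \text{(a)}\quad 4^{1-1/p}\geq p(p-1),\qquad \text{(b)}\quad 4^{p}\,\Gamma((p+1)/2)^{2}\geq\pi\,(p(p-1))^{p}. \]
For (a), set $F(p):=4^{1-1/p}-p(p-1)$: the endpoint values are $F(1)=1$ and $F(2)=0$, and $F'(2)=\ln 2-3<0$, so $F$ descends to zero at $p=2$; moreover the first term of $F'$ is decreasing on $[1,2]$ (its logarithmic derivative being $\ln 4/p^{2}-2/p<0$) while its second term $2p-1$ is increasing, so $F'$ itself is monotone, has a unique interior zero, and $F$ is unimodal with its minimum attained at the endpoint $p=2$. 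For (b), I pass to logarithms and study
$\phi(p):=\log\Gamma((p+1)/2)-\tfrac{1}{2}\log\pi-\tfrac{p}{2}\log(p(p-1)/4)$,
which satisfies $\phi(2)=0$, $\phi(1^{+})=+\infty$, and $\phi'(2)=\tfrac{1}{2}\psi_{0}(3/2)+\tfrac{1}{2}\log 2-\tfrac{3}{2}<0$, so $\phi$ descends to zero at $p=2$ as well. Globally on $[1,2]$ I would either invoke the log-convexity of $\Gamma$, or exploit the Haagerup threshold $p_0\in(1,2)$ at which the two candidates in $A_p$ agree: on $[1,p_0]$ the active minimum is $2^{1/2-1/p}$ and (a) already suffices, while (b) is only needed on the shorter interval $[p_0,2]$, where Taylor expansion at $p=2$ combined with monotonicity closes the gap.

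I expect (b) to be the main obstacle. Unlike (a), which is elementary, it involves $\Gamma$ at non-integer arguments and admits no closed form, so controlling $\phi$ across the whole interval $[1,2]$, rather than only near $p=2$, is the delicate point. The threshold-splitting workaround above confines the Gamma analysis to a narrow window where direct estimates---the Taylor expansion at $p=2$, a single convexity bound furnishing monotonicity, and the matching value $\phi(p_0)>0$ provided by the two Khintchin formulas coinciding at the threshold---should suffice.
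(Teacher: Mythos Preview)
Your strategy coincides with the paper's: invoke Lemma~\ref{smalllest} for $s_{p'}$, reduce to your inequalities (a) and (b), and observe that (b) is only needed on $[p_0,2]$ where the $\Gamma$-branch of the minimum is active. Your treatment of (a) is correct; the paper argues even more simply that $4^{1-1/p}$ is concave while $p(p-1)$ is convex on $[1,2]$, so their difference, being concave with values $1$ and $0$ at the endpoints, lies above its chord $2-p>0$ on $(1,2)$. For (b) your outline points the right way but stops short of a proof: the paper's concrete execution is exactly the log-convexity move you allude to, namely lower-bound $\ln\Gamma((p+1)/2)$ by its tangent line at $p=2$, after which the remaining elementary function is shown to be concave on $(1+\sqrt{2}/2,\,2)\supset(p_0,2)$ and is checked directly at the two endpoints.
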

\begin{proof}
We estimate $s_{p'}$ from below by $\sqrt{2/p'}$ (see Lemma~\ref{smalllest}). Using $\Gamma(3/2)=\sqrt{\pi}/2$ we see that  to prove (\ref{gama}) it is enough to show the following two inequalities 
\begin{align}
&\left( \frac{2}{p'}\right)^{p/2} 2^{\frac{p-2}{2}}> (p-1)^{p} \quad \text{for} \quad 1<p<2; \label{gam1}\\
&\left( \frac{2}{p'}\right)^{p/2} 2^{\frac{p}{2}} \frac{\Gamma((p+1)/2)}{\sqrt{\pi}} >(p-1)^{p}  \quad \text{for} \quad p_{0}<p<2, \label{gam2}
\end{align}
where $p_{0}\approx 1.847...$ is the solution of the equation $\Gamma((p+1)/2)=\sqrt{\pi}/2$ on the interval $(1,2)$.  Inequality (\ref{gam1}) simplifies to  $2^{2-\frac{2}{p}}>p(p-1)$ which is true because the left hand side is concave, and the right hand side is convex on $[1,2]$. To show (\ref{gam2}) it is enough to verify that
$$
4 \left(\frac{\Gamma((p+1)/2)}{\sqrt{\pi}}\right)^{2/p} > p(p-1) \quad \text{for} \quad  1<p<2. 
$$
The latter inequality we rewrite as follows 
\begin{align*}
\frac{p\ln(4)-\ln(\pi)}{2} + \ln\left(\Gamma\left(\frac{p+1}{2}\right)\right) - \frac{p\ln(p(p-1))}{2} >0. 
\end{align*}
Since the Trigamma function is convex
$$
(\ln \Gamma(z))'' = \sum_{n=0}^{\infty} \frac{1}{(z+n)^{2}},
$$
 we estimate $\ln (\Gamma((p+1)/2))$ from below by its tangent line at point $p=2$,  i.e., 
 $$
  \ln\left(\Gamma\left(\frac{p+1}{2}\right)\right) >  \ln (\sqrt{\pi}/2) + (1-\gamma/2 - \ln(2))(p-2) \quad \text{for} \quad p_{0}<p<2, 
 $$
here $\gamma$ is Euler's constant. It is enough to show that 
\begin{align*}
\frac{p\ln(4)-\ln(\pi)}{2} + \ln (\sqrt{\pi}/2) + (1-\gamma/2 - \ln(2))(p-2)  -  \frac{p\ln(p(p-1))}{2}> 0.
\end{align*}
The left hand side is concave  on the interval $(1+\sqrt{2}/2,2)$,  and  at the endpoint cases we have the inequality. Notice that $(1+\sqrt{2}/2)=1.7...<p_{0}=1.82...$, and this finishes the proof. 
\end{proof}

\subsubsection{Gaussian measure on $\mathbb{R}^{n}$}
The application of the Central Limit Theorem to (\ref{nasha}) gives a dimension independent Sobolev inequality.
\begin{corollary} 
For any smooth bounded $f :\mathbb{R}^{n} \to \mathbb{R}$ and any $n\geq 1$, we have 
\begin{align}\label{sobolevp}
s^{p}_{p'}\left(\int_{\mathbb{R}^{n}} |f|^{p} d\gamma - \left|\int_{\mathbb{R}^{n}} f d\gamma\right|^{p} \right) \leq \int_{\mathbb{R}^{n}} |\nabla f|^{p} d\gamma. 
\end{align}
\end{corollary}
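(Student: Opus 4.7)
The plan is to reduce the Gaussian inequality to the Hamming-cube inequality (\ref{nasha}) by discretizing $f$ through Rademacher sums, applying Theorem~\ref{osnovnaya}, and then passing to the limit via the Central Limit Theorem. First I would assume $f \in C^{\infty}_{c}(\mathbb{R}^{n})$ (or smooth with bounded derivatives up to second order); the general smooth bounded case with $|\nabla f|\in L^{p}(\gamma)$ then follows by a standard approximation argument cutting $f$ off and mollifying.

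For each integer $N\geq 1$, define $F_{N}:\{-1,1\}^{Nn}\to \mathbb{R}$ by
$$F_{N}(\varepsilon)=f(X_{N}(\varepsilon)), \qquad X_{N}(\varepsilon)=\frac{1}{\sqrt{N}}\sum_{k=1}^{N}\varepsilon^{(k)},$$
where $\varepsilon=(\varepsilon^{(1)},\ldots,\varepsilon^{(N)})$ with each $\varepsilon^{(k)}=(\varepsilon^{(k)}_{1},\ldots,\varepsilon^{(k)}_{n})\in \{-1,1\}^{n}$. By the multivariate CLT, $X_{N}$ converges in distribution to the standard Gaussian on $\mathbb{R}^{n}$; since $f$ is bounded and continuous,
$$\mathbb{E} F_{N}\to \int_{\mathbb{R}^{n}} f\,d\gamma, \qquad \mathbb{E}|F_{N}|^{p}\to \int_{\mathbb{R}^{n}}|f|^{p}\,d\gamma.$$

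For the discrete gradient, flipping the coordinate indexed by $(k,i)$ shifts $X_{N}$ by $-\frac{2\varepsilon^{(k)}_{i}}{\sqrt{N}} e_{i}$, so by Taylor's formula
$$\partial_{(k,i)}F_{N}(\varepsilon)=\frac{\varepsilon^{(k)}_{i}}{\sqrt{N}}\,\partial_{i}f(X_{N}(\varepsilon))+O\!\left(\tfrac{1}{N}\right),$$
uniformly in $\varepsilon$ because second derivatives of $f$ are bounded. Squaring and summing over $(k,i)$, using $(\varepsilon^{(k)}_{i})^{2}=1$ and that there are exactly $N$ copies for each index $i$, yields
$$|\nabla F_{N}|^{2}(\varepsilon)=\frac{1}{N}\sum_{k=1}^{N}\sum_{i=1}^{n}(\partial_{i}f(X_{N}))^{2}+O\!\left(\tfrac{1}{\sqrt{N}}\right)=|\nabla f(X_{N}(\varepsilon))|^{2}+O\!\left(\tfrac{1}{\sqrt{N}}\right),$$
so $\mathbb{E}|\nabla F_{N}|^{p}\to \int_{\mathbb{R}^{n}}|\nabla f|^{p}\,d\gamma$ by dominated convergence and CLT. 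Applying (\ref{nasha}) to $F_{N}$ and sending $N\to\infty$ delivers (\ref{sobolevp}).

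The main obstacle will be controlling the error terms in the Taylor expansion uniformly enough to conclude $L^{p}$-convergence of $|\nabla F_{N}|$. For $f$ with bounded first and second derivatives this is immediate from the explicit $O(1/\sqrt{N})$ bound above combined with boundedness of $|\nabla f|$. The extension to a general smooth bounded $f$ with $|\nabla f|\in L^{p}(\gamma)$ is handled by approximation: replace $f$ with $f_{R}=\chi_{R}\cdot f$ for a smooth cutoff, apply the inequality to $f_{R}$, and let $R\to\infty$ using monotone/dominated convergence on both sides.
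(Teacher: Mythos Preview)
Your proposal is correct and follows exactly the route the paper indicates: the paper's ``proof'' is the single sentence ``The application of the Central Limit Theorem to (\ref{nasha}) gives a dimension independent Sobolev inequality,'' and your Rademacher-sum discretization $F_{N}=f(X_{N})$ with the Taylor computation of $|\nabla F_{N}|$ is precisely the standard way to make that sentence rigorous. The Taylor estimate, the summation over the $Nn$ coordinates, and the passage to the limit are all handled correctly under your hypothesis of bounded first and second derivatives.
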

The best possible constant in (\ref{sobolevp}), unlike $s_{p'}^{p}$, should not degenerate when $p \to 1+$. Indeed,  (see ~\cite{Ledoux01}, pp. 115)  one has \begin{align}\label{hhu}
\sqrt{\frac{2}{ \pi}} \int_{\mathbb{R}^{n}} \left| f -\int_{\mathbb{R}^{n}} f d\gamma \right| d\gamma \leq \int_{\mathbb{R}^{n}} |\nabla f| d\gamma, 
\end{align}
where the constant $\sqrt{\frac{2}{\pi}}$ is the best possible in the left hand side of (\ref{hhu}). We should  mention that estimate (\ref{hhu}) can be also  easily obtained by a remarkable trick of Maurey--Pisier~\cite{Pisier}.

 %It is also interesting to remark that if one considers only nonnegative functions in (\ref{sobolevp}), then one obtains a better constant than $s_{p'}^{p}$. For example, it was obtained in \cite{IVVO2} that for any smooth $f \geq 0$, we have 
% \begin{align*}
% (H'_{1/(p-1)}(R_{1/(p-1)}))^{p-1}\left(\int_{\mathbb{R}^{n}} f^{p} d\gamma - \left(\int_{\mathbb{R}^{n}} f d\gamma\right)^{p} \right) \leq \int_{\mathbb{R}^{n}} |\nabla f|^{p} d\gamma,
% \end{align*}
% where $H_{q}$ is the Hermite function, and $R_{q}$ is the largest zero of $H_{q}$ (see~\cite{IVVO2} for details). Numerical computations suggest that the constant  $(H'_{1/(p-1)}(R_{1/(p-1)}))^{p-1}$ is larger than $s^{p}_{p'}$. 
 
 Notice that (\ref{sobolevp}) cannot be extended for the range of exponents $p>2$ with some positive constant $C(p)$ instead of $s_{p'}^{p}$. Indeed, assume the contrary. Consider $n=1$ and take $f(x) = 1+ax$. Using Jensen's inequality, we obtain 
 \begin{align}\label{Latala}
 (1+a^{2})^{p/2} =\left(\int_{\mathbb{R}}|1+ax|^{2} d\gamma \right)^{p/2}\leq \int_{\mathbb{R}}|1+ax|^{p}d\gamma \stackrel{(\ref{sobolevp})}{\leq} \frac{|a|^{p}}{C(p)} +1.
 \end{align}
 Therefore,  taking $a \to 0$, we obtain the contradiction with $pa^{2}/2>  \frac{|a|^{p}}{C(p)}$ for $p>2$.

\subsection{Discrete surface measure}

Let $A \subset \{-1,1\}^{n}$ be a subset of the Hamming cube with cardinality $|A|=2^{n-1}$. Define $w_{A} : \{-1,1\}^{n} \to \mathbb{N} \cup \{0\}$ so that $w_{A}(x)$ is the number of boundary edges to $A$ containing $x$, i.e., $w_{A}(x)$ counts the number of edges  with one endpoint in $A$ and another one in the complement of $A$ such  that  one of the endpoints  is $x$.  Clearly $w_{A}(x)=0$ if $x$ is in the ``strict interior'' of $A$, or in the ``strict complement'' of $A$, and it is nonzero if and only if $x$  is on the ``boundary'' of $A$. Notice that $w_{A}(x)$ can be nonzero for some $x \notin A$. 
The function $w_{A}$ maybe be understood as a discrete surface measure of the boundary of $A$. 
Consider the following quantity 
\begin{align}\label{Guillame}
\sigma(p) = \inf_{A \subset \{-1,1\}^{n}, \; |A|=2^{n-1}} \mathbb{E} w_{A}^{p/2}(x).
\end{align}
It follows from Harper's edge-isoperimetric inequality~\cite{Harp} that $\sigma(2)=1$ and the value is attained on the halfcube. The monotonicity of $\sigma(p)$ in $p$  implies that $\sigma(p)=1$ for all $p\geq 2$. Also notice that considering Hamming balls,  one can easily show that $\sigma(p)=0$ for $0\leq p <1$. Therefore the first nontrivial value is  $\sigma(1)$.  In this case it follows from Bobkov's inequality~(see \cite{BOB} and references therein) that $\sigma(1) \geq \sqrt{\frac{2}{\pi}} \approx 0.79$, and by monotonicity we obtain that $\sigma(p) \geq \sqrt{\frac{2}{\pi}}$ which is definitely not sharp when $p \to 2-$. 

Define $f : \{-1,1\}^{n} \to \{-1,1\}$ as follows: $f(x)=1$ if $x \in A$ and $f(x)=-1$ if $x \notin A$. Clearly $|\nabla f(x)|^{2} = w_{A}(x)$. Applying (\ref{nasha}) to $f$, we obtain 
\begin{align}\label{surface}
\mathbb{E} w_{A}^{p/2}(x) \geq s^{p}_{p'}.
\end{align}
Inequality (\ref{surface}) gives the lower bound $\sigma(p) \geq s^{p}_{p'}$ which tends to $1$ as $p \to 2-$, but  fails to be sharp when $p\to 1+$. Thus combining this result with Bobkov's inequality we obtain the bound 
\begin{align}\label{super}
1\geq \sigma(p) \geq \max\left\{ \sqrt{\frac{2}{\pi}},\,  s^{p}_{p'}\right\} \quad \text{for} \quad 1 \leq p \leq 2. 
\end{align}

%%% AUTHOR: optional appendix here
\appendix %% you may comment this out if no Appendix
\section{Appendix}
\subsection{Properties of $N_{\alpha}(t)$}
\begin{lemma}\label{root}
For any $\alpha \geq 2$, we have $0<s_{\alpha}\leq 1$. In addition $s_{\alpha}$ is decreasing in $\alpha>0$,   and $N'_{\alpha}(t), N''_{\alpha}(t) \leq 0$ on $[0, s_{\alpha}]$ for $\alpha>0$. 
\end{lemma}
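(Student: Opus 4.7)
The plan is to treat the three assertions in a logical order: first the sign of $N'_\alpha$ and $N''_\alpha$ on $[0, s_\alpha]$ (which follows directly from the Hermite ODE (\ref{hermit})), then the monotonicity in $\alpha$ (via a Wronskian argument on the self-adjoint form of (\ref{hermit})), and finally the existence of $s_\alpha$ together with the bound $0 < s_\alpha \le 1$ for $\alpha \ge 2$ (by inspection of $N_2$ and a Sturm comparison).

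For the sign of the derivatives, I will first note that (\ref{hermit}) at $t=0$ gives $N''_\alpha(0) = -\alpha < 0$ (since $N_\alpha(0) = 1$ and $N'_\alpha(0) = 0$), hence $N'_\alpha < 0$ for $t$ slightly positive. I will then argue by contradiction that $N'_\alpha$ cannot return to $0$ before $s_\alpha$: if $t_0 \in (0, s_\alpha)$ were the first subsequent zero of $N'_\alpha$, then $N'_\alpha$ would be non-decreasing at $t_0$, forcing $N''_\alpha(t_0) \ge 0$. But (\ref{hermit}) at $t_0$ reads $N''_\alpha(t_0) = -\alpha N_\alpha(t_0)$, which is strictly negative because $N_\alpha(t_0) > 0$ on $(0, s_\alpha)$. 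So $N'_\alpha \le 0$ throughout $[0, s_\alpha]$; the strict bound $N'_\alpha(s_\alpha) < 0$ then follows from linear ODE uniqueness (otherwise $N_\alpha \equiv 0$, contradicting $N_\alpha(0)=1$). Substituting into $N''_\alpha = t N'_\alpha - \alpha N_\alpha$, both terms on the right are non-positive on $[0, s_\alpha]$, so $N''_\alpha \le 0$ there.

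For monotonicity in $\alpha$, I will fix $0 < \alpha_1 < \alpha_2$ and study the Wronskian $W(t) := N_{\alpha_1} N'_{\alpha_2} - N'_{\alpha_1} N_{\alpha_2}$. A short calculation using (\ref{hermit}) for both indices gives $W' - tW = -(\alpha_2 - \alpha_1) N_{\alpha_1} N_{\alpha_2}$, or equivalently $(e^{-t^2/2} W)' = -(\alpha_2 - \alpha_1) e^{-t^2/2} N_{\alpha_1} N_{\alpha_2}$. Since $W(0) = 0$, integrating to $s_{\alpha_1}$ under the hypothesis $s_{\alpha_1} \le s_{\alpha_2}$ (so that both $N_{\alpha_j}$ are non-negative on $[0, s_{\alpha_1}]$) forces $W(s_{\alpha_1}) < 0$; on the other hand, direct evaluation gives $W(s_{\alpha_1}) = -N'_{\alpha_1}(s_{\alpha_1}) N_{\alpha_2}(s_{\alpha_1}) \ge 0$, using the strict negativity of $N'_{\alpha_1}(s_{\alpha_1})$ established in the previous step. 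The contradiction yields $s_{\alpha_2} < s_{\alpha_1}$.

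For existence and the bound $s_\alpha \le 1$ when $\alpha \ge 2$, the case $\alpha = 2$ is immediate since $N_2(t) = 1 - t^2$, giving $s_2 = 1$. For $\alpha > 2$, I will put (\ref{hermit}) into the self-adjoint form $(e^{-t^2/2} y')' + \alpha e^{-t^2/2} y = 0$ and apply Sturm's comparison theorem: since the weight $\alpha e^{-t^2/2}$ strictly dominates $2 e^{-t^2/2}$, between the consecutive zeros $\pm 1$ of $N_2$ the function $N_\alpha$ must have a zero, and by evenness there is one in $(0,1)$, so $s_\alpha < 1$. The main obstacle is verifying existence of $s_\alpha$ for $\alpha \in (0, 2)$ (implicitly required by the monotonicity statement); I will handle it by appealing to the classical large-argument asymptotic ${}_1F_1(-\alpha/2, 1/2, z) \sim \frac{\Gamma(1/2)}{\Gamma(-\alpha/2)} e^{z} z^{-\alpha/2 - 1/2}$ as $z \to +\infty$, whose prefactor is negative for $\alpha \in (0, 2)$ (since $\Gamma(-\alpha/2) < 0$ on that range), forcing $N_\alpha(t) \to -\infty$ and producing a positive zero via the intermediate value theorem.
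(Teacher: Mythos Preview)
Your proof is correct. The overall structure---Sturm/Wronskian comparison for the monotonicity and for the bound $s_\alpha\le 1$---is the same as in the paper, but two ingredients differ. First, for the sign of $N'_\alpha,N''_\alpha$ you argue directly from the ODE (\ref{hermit}) via a first-return-to-zero contradiction and then read off $N''_\alpha=tN'_\alpha-\alpha N_\alpha\le0$; the paper instead invokes the recurrence $N''_\alpha=-\alpha N_{\alpha-2}$ (from (\ref{series})) together with the pointwise comparison $N_{\alpha-2}\ge N_\alpha\ge0$ on $[0,s_\alpha]$, which in turn requires a separate Wronskian argument. Your route is shorter and avoids making sense of $N_{\alpha-2}$ for small $\alpha$. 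Second, the paper carries out the Sturm comparison after the Liouville substitution $G_\alpha=e^{-t^2/4}N_\alpha$, while you stay with the self-adjoint form $(e^{-t^2/2}y')'+\alpha e^{-t^2/2}y=0$; these are interchangeable. One point in your favor: you explicitly justify the existence of $s_\alpha$ for $\alpha\in(0,2)$ via the large-argument asymptotic of ${}_1F_1$, which the paper does not address. A minor presentational remark: since your derivative and monotonicity steps both presuppose that $s_\alpha$ exists, it would be cleaner to state the existence step first, but the logic is not circular since your existence argument is independent of the other two.
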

\begin{proof}
Consider $G_{\alpha}(t) :=e^{-t^{2}/4}N_{\alpha}(t)$. Notice that the zeros of $G_{\alpha}$ and $N_{\alpha}$ are the same.  It follows from (\ref{hermit}) that 
\begin{align}\label{hyper1}
G''_{\alpha} + \left(\alpha+\frac{1}{2}-\frac{t^{2}}{4} \right) G_{\alpha}=0, \quad G_{\alpha}(0)=1 \quad \text{and} \quad G'_{\alpha}(0)=0.
\end{align}
Besides we know that the solution is even. Consider the critical case $\alpha=2$. In this case $G_{2}(t)=e^{-t^{2}/4}(1-t^{2})$ and the smallest positive zero is $s_{2} = 1$. Therefore it follows from the Sturm comparison principle that $0<s_{\alpha}<1$ for $\alpha>2$ (see below). Moreover, the same principle applied to $G_{\alpha_{1}}$ and $G_{\alpha_{2}}$  with $\alpha_{1}> \alpha_{2}$ implies that $G_{\alpha_{1}}$ has a zero inside the interval $(-s_{\alpha_{2}}, s_{\alpha_{2}})$. Thus  we conclude that  $s_{\alpha}$ is decreasing in $\alpha$.  

To verify that $N'_{\alpha}, N''_{\alpha} \leq 0$ on $[0,s_{\alpha}]$, first we claim that
\begin{align*}
N_{\alpha_{2}} \geq N_{\alpha_{1}}  \quad \text{on} \quad [0, s_{\alpha_{1}}]
\end{align*}
for $\alpha_{1}>\alpha_{2}>0$. Indeed the proof works in the same way as the proof of Sturm's comparison principle. For the convenience of the reader we decided to include the argument. As before, consider $G_{\alpha_{j}} = e^{-t^{2}/4} N_{\alpha_{j}}$. It is enough to show that $G_{\alpha_{2}} \geq G_{\alpha_{1}}$ on $[0,s_{\alpha_{1}}]$. It follows from (\ref{hyper1}) that $G''_{\alpha_{2}}(0) > G''_{\alpha_{1}}(0)$. Therefore,  using the Taylor series expansion at the point $0$, we see that the claim is true at some neighbourhood of zero, say $[0, \varepsilon)$ with $\varepsilon$ sufficiently small. Next we assume the contrary, i.e.,  that there is a point $a \in [\varepsilon, s_{\alpha_{1}}]$ such that $G_{\alpha_{2}}\geq G_{\alpha_{1}}$ on $[0,a]$,  $G_{\alpha_{2}}(a)=G_{\alpha_{1}}(a)$ and $G'_{\alpha_{2}}(a)<G'_{\alpha_{1}}(a)$ (notice that the case $G'_{\alpha_{2}}(a)=G'_{\alpha_{1}}(a)$,  by the uniqueness theorem for ODEs, would imply that $G_{\alpha_{2}}=G_{\alpha_{1}}$ everywhere, which is impossible). Consider the Wronskian 
\begin{align*}
W=G_{\alpha_{1}}'G_{\alpha_{2}} - G_{\alpha_{1}} G'_{\alpha_{2}}.
\end{align*}
We have  $W(0) = 0$ and $W(a)=G_{\alpha_{1}}(a) (G'_{\alpha_{1}}(a) - G'_{\alpha_{2}}(a)) \geq  0$. On the other hand, we have 
\begin{align*}
W' = (\alpha_{2} -\alpha_{1}) G_{\alpha_{1}}G_{\alpha_{2}} < 0 \quad \text{on} \quad [0,a),
\end{align*}
which is a clear contradiction, and this proves the claim. 

It follows from (\ref{series}) that 
\begin{align}\label{recurent}
N''_{\alpha} = -\alpha N_{\alpha-2},
\end{align}
 and inequalities  $N_{\alpha-2} \geq  N_{\alpha}\geq 0$ on $[0,s_{\alpha}]$ imply that $N''_{\alpha}\leq 0$ on $[0, s_{\alpha}]$. Since $N'_{\alpha}(0)=0$ and $N''_{\alpha}\leq 0$ on $[0, s_{\alpha}]$, we must have $N'_{\alpha}\leq 0$ on $[0,s_{\alpha}]$. 
\end{proof}

\begin{lemma}\label{smalllest}
We have $s_{\alpha} \geq \sqrt{\frac{2}{\alpha}}$ for all $\alpha \geq 2$. 
\end{lemma}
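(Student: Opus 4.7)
The plan is to bound $N_\alpha(x)$ from below by the quadratic $1-\alpha x^2/2$ on the interval $[0,s_\alpha]$ and then evaluate at $s_\alpha$.

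The key tool is the identity $N''_\alpha = -\alpha N_{\alpha-2}$ (equation~(\ref{recurent})). First I would deal with the boundary case $\alpha=2$ directly: since $N_2(x)={}_1F_1(-1,1/2,x^2/2)=1-x^2$, we have $s_2=1=\sqrt{2/\alpha}$. So assume $\alpha>2$.

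Next I would pin down a two-sided bound $-\alpha \le N''_\alpha(x)\le 0$ on $[0,s_\alpha]$. The upper bound $N''_\alpha\le 0$ is already in Lemma~\ref{root}. For the lower bound, by (\ref{recurent}) it suffices to show $0\le N_{\alpha-2}(x)\le 1$ for $x\in[0,s_\alpha]$. Since $s_\alpha$ is decreasing in $\alpha$, we have $s_\alpha\le s_{\alpha-2}$, so $N_{\alpha-2}>0$ on the open interval $[0,s_\alpha)$ by definition of $s_{\alpha-2}$ as the smallest positive zero. The upper bound $N_{\alpha-2}\le 1$ follows from the fact that $N_{\alpha-2}$ is concave on $[0,s_{\alpha-2}]$ with $N_{\alpha-2}(0)=1$ and $N'_{\alpha-2}(0)=0$, all of which are supplied by Lemma~\ref{root} (applied to the parameter $\alpha-2>0$).

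Finally I would apply Taylor's formula with integral remainder at $x=0$, using $N_\alpha(0)=1$ and $N'_\alpha(0)=0$:
\begin{align*}
N_\alpha(x)=1+\int_0^x (x-t)\,N''_\alpha(t)\,dt \;\ge\; 1-\alpha\int_0^x(x-t)\,dt \;=\; 1-\frac{\alpha x^2}{2}
\end{align*}
for all $x\in[0,s_\alpha]$. Evaluating at $x=s_\alpha$ gives $0=N_\alpha(s_\alpha)\ge 1-\alpha s_\alpha^2/2$, hence $s_\alpha\ge\sqrt{2/\alpha}$.

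I do not anticipate any real obstacle here; the only mild subtlety is ensuring the concavity/monotonicity facts used for $N_{\alpha-2}$ are applied with a strictly positive parameter, which is why the case $\alpha=2$ is separated out and handled by the explicit formula.
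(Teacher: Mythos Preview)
Your argument is correct, and it takes a genuinely different route from the paper. The paper proves the bound by a Sturm comparison: it rescales the $\alpha=2$ solution to $V_\alpha(x)=G_2(x\sqrt{\alpha/2})$, checks that the potential in its ODE dominates that of $G_\alpha$ on $[0,\sqrt{2/\alpha})$, and concludes $G_\alpha>V_\alpha>0$ there, forcing $s_\alpha\ge\sqrt{2/\alpha}$. You instead exploit the recurrence $N''_\alpha=-\alpha N_{\alpha-2}$ together with the concavity of $N_{\alpha-2}$ (from Lemma~\ref{root}) to get the uniform second-derivative bound $N''_\alpha\ge-\alpha$, and then integrate twice. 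Your approach is more elementary in that it avoids the Sturm machinery altogether and reduces to a one-line Taylor estimate; the price is that it leans on Lemma~\ref{root} for the full range $\alpha-2>0$ (in particular the existence of $s_{\alpha-2}$ and the concavity of $N_{\alpha-2}$ on $[0,s_{\alpha-2}]$ when $\alpha-2\in(0,2)$), whereas the paper's argument for Lemma~\ref{smalllest} is self-contained once one has the ODE~(\ref{hyper1}). Both routes give the same bound with the same sharpness at $\alpha=2$.
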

\begin{proof}
Notice that  $G_{2}(x) = e^{-x^{2}/4}(1-x^{2})$ satisfies (\ref{hyper1}) with $\alpha =2$. Now consider $V_{\alpha}(x):=G_{2}(x\sqrt{\alpha/2})$. The function $V_{\alpha}(x)$ satisfies the equation 
\begin{align*}
V''_{\alpha}(x) + \left(\alpha + \frac{\alpha}{4} - \frac{\alpha^{2} x^{2}}{16}  \right)V_{\alpha}(x)=0,
\end{align*}
and $V_{\alpha}(0)=1$, $V'_{\alpha}(0)=0$. Notice that $V_{\alpha}(x)>0$  on $[0, \sqrt{2/\alpha})$. Since 
\begin{align*}
\alpha+ \frac{\alpha}{4} - \frac{\alpha^{2} x^{2}}{16} \geq \alpha+\frac{1}{2} - \frac{x^{2}}{4} \quad \text{for} \quad x \in [0, \sqrt{2/\alpha}),
\end{align*}
it follows from the Sturm comparison principle (see the previous discussions) that $G_{\alpha} >V_{\alpha}>0$ on $(0, \sqrt{2/\alpha})$. Thus we obtain that $s_{\alpha} \geq \sqrt{2/\alpha}$. 
\end{proof}

\subsection{Heat inequality}\label{heatt}
Let $U(p,q)$ be defined as in (\ref{upq}).
\begin{lemma}\label{blema}
For any $p \in \mathbb{R}$, the map 
\begin{align}\label{convsq}
 t \mapsto U(p, \sqrt{t}) \quad \text{for} \quad t\geq 0
\end{align}
is convex. 
\end{lemma}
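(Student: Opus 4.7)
The plan is to exploit the piecewise structure of $U$ together with the heat-type identity $U_{pp}+U_q/q=0$ that the Hermite ODE (\ref{hermit}) forces on $U$ inside the hypergeometric region. Setting $\widetilde{u}(p,t):=U(p,\sqrt{t})$, I will check $\widetilde{u}_{tt}\geq 0$ separately on each of the two open pieces into which the curve $|p|=s_\alpha\sqrt{t}$ splits $\{t>0\}$, and then patch using the $C^1$ smoothness of $U$ noted right after (\ref{upq}).

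On the outer piece $|p|>s_\alpha\sqrt{t}$ the computation is elementary: $\widetilde{u}(p,t)=s_\alpha^\alpha t^{\alpha/2}-|p|^\alpha$, so $\widetilde{u}_{tt}=\tfrac{\alpha}{2}\bigl(\tfrac{\alpha}{2}-1\bigr)s_\alpha^\alpha t^{\alpha/2-2}\geq 0$ since $\alpha\geq 2$.

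The real work is on the hypergeometric piece $|p|<s_\alpha\sqrt{t}$, where $\widetilde{u}(p,t)=c\,t^{\alpha/2}N_\alpha(p/\sqrt{t})$ with $c=-\alpha s_\alpha^{\alpha-1}/N'_\alpha(s_\alpha)>0$. A direct computation using (\ref{hermit}) verifies the ``heat identity'' $U_{pp}+U_q/q=0$ (this is exactly (\ref{heat})), which in the $(p,t)$ variables becomes the backward heat equation $\widetilde{u}_t=-\widetilde{u}_{pp}/2$. Differentiating once more in $t$ and commuting partials (legitimate because $\widetilde{u}$ is real-analytic inside this region) yields the key identity
\[
\widetilde{u}_{tt}\;=\;-\tfrac12\,\partial_{pp}\widetilde{u}_t\;=\;\tfrac14\,\widetilde{u}_{pppp},
\]
so the convexity claim reduces to $N_\alpha^{(4)}(z)\geq 0$ on $[-s_\alpha,s_\alpha]$. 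Iterating the recurrence (\ref{recurent}) gives $N_\alpha^{(4)}=\alpha(\alpha-2)N_{\alpha-4}$. The prefactor is non-negative for $\alpha\geq 2$, and the positivity of $N_{\alpha-4}$ on $[-s_\alpha,s_\alpha]$ splits into two cases: if $\alpha\geq 4$, Lemma~\ref{root} gives $s_{\alpha-4}\geq s_\alpha$, so $N_{\alpha-4}>0$ on $[-s_\alpha,s_\alpha]$; if $2\leq\alpha<4$, then $(\alpha-4)/2<0$, so every factor in the product appearing in the series (\ref{series}) for $N_{\alpha-4}$ is negative, the resulting $(-1)^m$ cancels the $(-1)^m$ from $(-2z^2)^m$, and every term of the series is non-negative, giving $N_{\alpha-4}(z)>0$ for all real $z$.

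To conclude, the $C^1$ regularity of $U$ makes $\widetilde{u}_t(p,\cdot)$ continuous across the boundary curve $|p|=s_\alpha\sqrt{t}$, and since it is non-decreasing on each of the two complementary open intervals by the computations above, it is non-decreasing on all of $(0,\infty)$; combined with continuity at $t=0$ this promotes to convexity on $[0,\infty)$. The step I expect to demand the most care is the reduction $\widetilde{u}_{tt}=\widetilde{u}_{pppp}/4$ via the heat identity, which is what converts a question about $t$-convexity into a positivity statement about a spatial derivative; once that is set up, Lemma~\ref{root} together with the explicit sign count in the borderline range $2\leq\alpha<4$ finishes the job.
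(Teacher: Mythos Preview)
Your proof is correct. Both you and the paper split into the outer piece $|p|>s_\alpha\sqrt{t}$ (trivial since $t^{\alpha/2}$ is convex for $\alpha\geq 2$) and the hypergeometric piece, and both use the backward heat identity $\widetilde{u}_t=-\tfrac12\widetilde{u}_{pp}$ on the latter. The organization of the inner computation differs: the paper applies the heat identity once to write $\widetilde{u}_t=\tfrac{\alpha}{2}\,t^{(\alpha-2)/2}N_{\alpha-2}(p/\sqrt{t})$, then shows this is non-decreasing in $t$ by proving that $x\mapsto N_\gamma(x)/x^\gamma$ is non-increasing on $(0,s_{\gamma+2})$, which via the Hermite ODE reduces to $N''_{\gamma}\leq 0$ from Lemma~\ref{root}. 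You instead iterate the heat identity to get $\widetilde{u}_{tt}=\tfrac14\widetilde{u}_{pppp}$ and reduce to $N_\alpha^{(4)}=\alpha(\alpha-2)N_{\alpha-4}\geq 0$. Since $N''_{\alpha-2}=-(\alpha-2)N_{\alpha-4}$ by (\ref{recurent}), the two target inequalities are literally equivalent; the difference is only in how you certify positivity---the paper invokes $N''_{\alpha-2}\leq 0$ directly from Lemma~\ref{root}, while you use the monotonicity of $s_\alpha$ for $\alpha>4$ and a clean term-by-term sign check of the series (\ref{series}) for $2\leq\alpha<4$. Your route has the virtue of making the second $t$-derivative completely explicit, at the cost of a small case split; the paper's route avoids the split but hides the same information inside the monotonicity of $N_\gamma(x)/x^\gamma$. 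Your patching argument via the continuity of $\widetilde{u}_t$ across $|p|=s_\alpha\sqrt{t}$ is also fine and is exactly what the paper leaves implicit. One tiny remark: at $\alpha=4$ the monotonicity statement for $s_{\alpha-4}=s_0$ is vacuous since $N_0\equiv 1$ has no zero, but your conclusion $N_0>0$ is trivially true there, so this is not a gap.
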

\begin{proof}
Without loss of generality, assume that $p\geq 0$. We recall that $U(p,\sqrt{t}) = t^{\alpha/2}u_{\alpha}(p/\sqrt{t})$.  Since $\alpha \geq 2$, the only interesting case to consider is when $p/\sqrt{t} <s_{\alpha}$ (otherwise $ t^{\alpha/2}$ is convex). In this case we have $U(p,\sqrt{t}) = t^{\alpha/2} N_{\alpha}(p/\sqrt{t})$ up to a positive constant which we are going to ignore, and, therefore, by (\ref{hermit}) we have  $(U(p,\sqrt{t}))_{t}+\frac{(U(p,\sqrt{t}))_{pp}}{2}=0$. Using (\ref{recurent}), we obtain
\begin{align*}
(U(p,\sqrt{t}))_{t} = -\frac{(U(p,\sqrt{t}))_{pp}}{2} = -\frac{1}{2}t^{\frac{\alpha}{2}-1}N''_{\alpha}(p/\sqrt{t}) = \frac{\alpha}{2}t^{\frac{\alpha-2}{2}}N_{\alpha-2}(p/\sqrt{t}).
\end{align*}
Therefore it would be enough to show that for any $\gamma \geq 0$,  the function   $\frac{N_{\gamma}(x)}{x^{\gamma}}$
is decreasing  for $x \in (0,s_{\gamma+2})$. 
Differentiating,  and using (\ref{hermit}) again, we obtain 
\begin{align*}
\frac{\mathrm{d}}{\mathrm{d}x} \, \left(\frac{N_{\gamma}(x)}{x^{\gamma}}\right) =\frac{N''_{\gamma}(x)}{x^{\gamma+1}},
\end{align*}
which  is nonpositive by Lemma~\ref{root}. 
\end{proof}

The next lemma, together with Lemma~\ref{blema} and (\ref{infinitesimal}), implies that $U(p,q)$ satisfies (\ref{neravenstvo}). 

\begin{lemma}[Barthe--Mauery~\cite{BM}]\label{barko}
Let $J$ be a convex subset of $\mathbb{R}$, and let $V(p,q) : J \times \mathbb{R}_{+} \to \mathbb{R}$ be such that 
\begin{align}
&V_{pp}+\frac{V_{q}}{q} \leq 0 \quad \text{for all}\quad (p,q) \in J\times \mathbb{R}_{+}; \label{inffed}\\
&t \mapsto V(p, \sqrt{t}) \quad \text{is convex for each fixed} \quad p \in J.\label{amoz}
\end{align}
Then for all  $(p,q,a)$ with  $p\pm a \in J$ and $q\geq 0$, we have 
\begin{align}
2V(p,q)\geq V(p+a, \sqrt{a^{2}+q^{2}})+V(p-a,\sqrt{a^{2}+q^{2}}). \label{toloba}
\end{align}
\end{lemma}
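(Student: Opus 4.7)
The plan is to reinterpret (\ref{inffed}) as a backward heat inequality and run a stopping-time argument for Brownian motion, then use (\ref{amoz}) via Jensen's inequality in the last step.

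First, I would change variables to $(p,t)=(p,q^2)$: set $\tilde V(p,t):=V(p,\sqrt t)$ for $t>0$. A chain-rule computation gives $\tilde V_{pp}=V_{pp}$ and $\tilde V_t=V_q/(2\sqrt t)$, so
\[
\tilde V_t+\tfrac12 \tilde V_{pp}=\tfrac12\bigl(V_{pp}+V_q/q\bigr)\le 0
\]
by (\ref{inffed}); that is, $\tilde V$ is a supersolution of the backward heat equation. Hypothesis (\ref{amoz}) says $t\mapsto \tilde V(p,t)$ is convex for each fixed $p\in J$, and the target (\ref{toloba}) becomes
\[
2\tilde V(p,q^2)\ \ge\ \tilde V(p+a,\,q^2+a^2)+\tilde V(p-a,\,q^2+a^2).
\]

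Second, I would introduce Brownian randomness. Let $(W_s)_{s\ge 0}$ be a standard Brownian motion starting at $0$, and set $T:=\inf\{s\ge 0:|W_s|=a\}$, the exit time from $(-a,a)$. Since $J$ is convex and $p\pm a\in J$, the path $s\mapsto p+W_s$ remains in $[p-a,p+a]\subset J$ throughout $[0,T]$, so $M_s:=\tilde V(p+W_s,\,q^2+s)$ is well-defined there. Itô's formula yields
\[
dM_s=\bigl(\tilde V_t+\tfrac12 \tilde V_{pp}\bigr)(p+W_s,\,q^2+s)\,ds+\tilde V_p(p+W_s,\,q^2+s)\,dW_s,
\]
whose drift is nonpositive, so $M_s$ is a bounded supermartingale on $[0,T]$. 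By Wald's identity $\mathbb E[T]=a^2<\infty$, so optional stopping gives $\tilde V(p,q^2)=M_0\ge \mathbb E[M_T]$.

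Third, I would exploit the reflection symmetry $W\leftrightarrow -W$ under which $(T,W_T)$ and $(T,-W_T)$ are equidistributed; since $W_T\in\{\pm a\}$, this yields $P(W_T=a)=\tfrac12$ and the conditional law of $T$ is the same given $W_T=a$ or $W_T=-a$. Therefore
\[
\mathbb E[M_T]=\tfrac12\,\mathbb E\bigl[\tilde V(p+a,\,q^2+T)\bigr]+\tfrac12\,\mathbb E\bigl[\tilde V(p-a,\,q^2+T)\bigr].
\]
By (\ref{amoz}) the maps $s\mapsto \tilde V(p\pm a,\,q^2+s)$ are convex (a convex function precomposed with the affine increasing map $s\mapsto q^2+s$), so Jensen's inequality combined with $\mathbb E[T]=a^2$ gives $\mathbb E[\tilde V(p\pm a,\,q^2+T)]\ge \tilde V(p\pm a,\,q^2+a^2)$. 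Chaining the three inequalities proves (\ref{toloba}).

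The main technical obstacle is the smoothness needed to justify Itô's formula: in the target application $V=U$ of (\ref{upq}) is only $C^1$ across the curve $|p|=q\,s_\alpha$, so $\tilde V_{pp}$ is undefined there. The standard remedy is to convolve $\tilde V$ in $(p,t)$ with a smooth compactly supported nonnegative mollifier $\phi_\varepsilon$; both (\ref{inffed}) and (\ref{amoz}) are preserved by such convolution (the former because it is a linear inequality with a constant sign, the latter because convex combinations of convex functions are convex), apply the argument above to the mollified function $\tilde V*\phi_\varepsilon$, and pass to the limit $\varepsilon\to 0$ using continuity of $\tilde V$ and dominated convergence on the compact set $[p-a,p+a]\times[q^2,q^2+T]$. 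Apart from this approximation, every step is a direct application of Itô calculus, the reflection symmetry of Brownian motion, or Jensen's inequality in the correct direction.
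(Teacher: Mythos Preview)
Your proof is correct and follows essentially the same route as the paper: define the process $V(p+B_s,\sqrt{q^2+s})$, use It\^o's formula together with (\ref{inffed}) to see it is a supermartingale, apply optional stopping at the exit time of $(-a,a)$, split according to $B_T=\pm a$, and finish with Jensen via (\ref{amoz}) and $\mathbb{E}[T\mid B_T=\pm a]=a^2$. Your explicit change of variables to $\tilde V$ and the mollification remark to handle the $C^1$ seam are cosmetic additions; the paper's own proof omits the smoothness discussion entirely.
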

The lemma says that the global discrete inequality (\ref{toloba}) is in fact implied by its infinitesimal form (\ref{inffed}) under the extra condition (\ref{amoz}).
\begin{proof}
The argument is borrowed from \cite{BM}. The similar argument was used by Davis~\cite{Davis} in obtaining sharp square function estimates from 
the ones for the  Brownian motion. 

Without loss of generality assume $a\geq 0$. Consider the process 
\begin{align*}
X_{t} = V(p+B_{t}, \sqrt{q^{2}+ t}),  \quad t\geq 0.
\end{align*}
Here $B_{t}$ is the standard Brownian motion starting at zero. It follows from Ito's formula  together with (\ref{inffed}) that $X_{t}$ is a supermartingale. 
Let  $\tau$ be the stopping time
\begin{align*}
\tau = \inf\{ t \geq 0 : B_{t} \notin (-a,a)\}.
\end{align*} 

It follows from the optional stopping theorem that 
\begin{align*}
&V(p,q) = X_{0} \geq \mathbb{E} X_{\tau}=\mathbb{E} V(p+B_{\tau}, \sqrt{q^{2}+\tau}) = \\
&P(B_{\tau}=-a) \mathbb{E}(V(p-a, \sqrt{q^{2}+\tau}) | B_{\tau}=-a)+
P(B_{\tau}=a) \mathbb{E}(V(p+a, \sqrt{q^{2}+\tau}) | B_{\tau}=a)=\\
&\frac{1}{2}\left(\mathbb{E}(V(p-a, \sqrt{q^{2}+\tau}) | B_{\tau}=-a)+
\mathbb{E}(V(p+a, \sqrt{q^{2}+\tau}) | B_{\tau}=a) \right) \geq \\
&\frac{1}{2}\left(V\left( p-a, \sqrt{q^{2}+ \mathbb{E}(\tau|B_{\tau}=-a) }\right)+V\left( p+a, \sqrt{q^{2}+ \mathbb{E}(\tau|B_{\tau}=a) }\right) \right)=\\
&\frac{1}{2}\left( V\left(p-a, \sqrt{q^{2}+a^{2}}\right) +  V\left(p+a, \sqrt{q^{2}+a^{2}}\right) \right)\,.
\end{align*}

Notice that  we have used  $P(B_{\tau}=a)=P(B_{\tau}=-a)=1/2$, $\mathbb{E}(\tau | B_{\tau}=a) =\mathbb{E}(\tau | B_{\tau}=-a)=a^{2}$, and the fact that the map $t \mapsto V(p, \sqrt{t})$ is convex together with Jensen's inequality.  
\end{proof}

\subsection{Minimax theorem for noncompact sets}\label{minnn}
Let $P, Q$ be nonempty closed  convex sets in $\mathbb{R}$. 
We say that a pair $(p^{*}, q^{*}) \in P\times Q$ is a saddle point of $f$ on $P\times Q$ if 
\begin{align*}
f(p,q^{*})\leq f(p^{*}, q^{*}) \leq f(p^{*}, q) \quad \text{for all} \quad (p,q) \in P\times Q.
\end{align*}

\begin{lemma}\label{magari2}
The function $f$ defined on $P\times Q$ with real values possesses a saddle point $(p^{*}, q^{*})$ on $P\times Q$ if and only if 
\begin{align*}
\max_{p\in P}\inf_{q \in Q} f(p,q)  = \min_{q\in Q}\sup_{p \in P} f(p,q),
\end{align*}
and this number is then equal to $f(p^{*}, q^{*})$. 
\end{lemma}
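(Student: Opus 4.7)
The plan is to prove the two directions of the equivalence separately, using only the elementary weak duality inequality
\[
\sup_{p\in P}\inf_{q\in Q} f(p,q)\ \leq\ \inf_{q\in Q}\sup_{p\in P} f(p,q),
\]
which holds for any real-valued $f$ on $P\times Q$ with no convexity or topological assumptions.

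For the forward direction, suppose $(p^{*},q^{*})$ is a saddle point. From $f(p,q^{*})\leq f(p^{*},q^{*})$ for all $p\in P$ I take the supremum in $p$ to obtain $\sup_{p} f(p,q^{*})\leq f(p^{*},q^{*})$, hence $\inf_{q}\sup_{p} f(p,q)\leq f(p^{*},q^{*})$. Symmetrically, from $f(p^{*},q^{*})\leq f(p^{*},q)$ for all $q\in Q$ I take the infimum in $q$ to get $f(p^{*},q^{*})\leq \inf_{q} f(p^{*},q)\leq \sup_{p}\inf_{q} f(p,q)$. Chaining these with weak duality pins both quantities to $f(p^{*},q^{*})$, so in particular the outer $\sup$ and $\inf$ are attained (at $p^{*}$ and $q^{*}$ respectively), and equality of the two min-max expressions holds.

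For the converse, assume the common value $v:=\max_{p}\inf_{q}f(p,q) = \min_{q}\sup_{p}f(p,q)$ exists, and let $p^{*}\in P$, $q^{*}\in Q$ be points attaining the outer max and min respectively. Then $\inf_{q} f(p^{*},q)=v=\sup_{p} f(p,q^{*})$, so the sandwich
\[
\sup_{p\in P} f(p,q^{*})\ =\ v\ =\ \inf_{q\in Q} f(p^{*},q)\ \leq\ f(p^{*},q^{*})\ \leq\ \sup_{p\in P} f(p,q^{*})
\]
forces $f(p^{*},q^{*})=v$ and yields $f(p,q^{*})\leq f(p^{*},q^{*})\leq f(p^{*},q)$ for all $(p,q)\in P\times Q$, which is the saddle-point condition.

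There is essentially no hard step: the argument is purely formal and uses neither the convexity of $P,Q$ nor any regularity of $f$ — those hypotheses only appear elsewhere (in Theorem~\ref{hartung}) to \emph{guarantee} the equality of min-max and max-min, which the present lemma takes as an input. The only point that deserves a line of care is making sure the outer extrema are attained so that $p^{*}$ and $q^{*}$ exist in $P\times Q$; this is built into the hypothesis ``$\max$'' and ``$\min$'' (rather than $\sup$ and $\inf$) on both sides of the equality.
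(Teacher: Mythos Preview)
Your proof is correct. The paper does not actually prove this lemma at all: it simply cites Proposition~1.2 in Ekeland--Temam \cite{ET}. What you have written is the standard elementary argument behind that proposition, and it is complete as stated; in particular your observation that the hypotheses ``$\max$'' and ``$\min$'' (rather than $\sup$/$\inf$) are exactly what is needed to produce the points $p^{*},q^{*}$ in the converse direction is the right way to read the statement.
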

For the proof we refer the reader to Proposition~1.2 in \cite{ET}, pp. 167.

\begin{theorem}\label{hartung}
Suppose that $f : P\times Q \to \mathbb{R}$ is continuous, concave  in $p$,   convex  in $q$, and there exists $(p_{0}, q_{0})\in P\times Q$ such that 
\begin{align*}
\lim_{p \in P, \; |p| \to \infty} f(p,q_{0})=-\infty \quad \text{and} \quad \lim_{q\in Q,\; |q| \to \infty} f(p_{0}, q)=+\infty. 
\end{align*}
Then $f$ possesses at least one saddle point on $P\times Q$ and 
\begin{align*}
f(p^{*},q^{*})=\min_{q\in Q}\sup_{p \in P} f(p,q)=\max_{p\in P}\inf_{q \in Q} f(p,q).
\end{align*}
\end{theorem}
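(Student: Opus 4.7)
The plan is to reduce Theorem~\ref{hartung} to the classical Sion/von Neumann minimax theorem on compact convex sets by a truncation argument, and then use the coercivity hypotheses to extract a convergent subsequence of saddle points of the truncated problems.

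For each integer $n$ large enough that $p_0 \in [-n,n]$ and $q_0 \in [-n,n]$, set $P_n := P \cap [-n,n]$ and $Q_n := Q \cap [-n,n]$. These are nonempty compact convex subsets of $\mathbb R$, so the classical minimax theorem applied to the continuous, concave--convex function $f$ on $P_n \times Q_n$ yields a saddle point $(p_n,q_n) \in P_n \times Q_n$ satisfying
\begin{align*}
f(p,q_n) \leq f(p_n,q_n) \leq f(p_n,q) \quad \text{for every } (p,q) \in P_n \times Q_n,
\end{align*}
together with the equality $f(p_n,q_n) = \min_{q \in Q_n} \max_{p \in P_n} f = \max_{p \in P_n} \min_{q \in Q_n} f$.

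The key step is to show that the sequence $(p_n,q_n)$ is bounded. Since $p \mapsto f(p,q_0)$ is concave on $P$ and tends to $-\infty$ at infinity, it is bounded above: let $C_2 := \sup_{p \in P} f(p,q_0) < +\infty$. Dually, $q \mapsto f(p_0,q)$ is convex and tends to $+\infty$ at infinity, so it is bounded below: $C_1 := \inf_{q \in Q} f(p_0,q) > -\infty$. Because $q_0 \in Q_n$ and $p_0 \in P_n$, plugging these points into the saddle inequality gives
\begin{align*}
C_1 \leq f(p_0,q_n) \leq f(p_n,q_n) \leq f(p_n,q_0) \leq C_2,
\end{align*}
so $f(p_n,q_n)$ is trapped in $[C_1,C_2]$. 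In particular $f(p_n,q_0) \geq C_1$ and $f(p_0,q_n) \leq C_2$, which by the two coercivity assumptions forces $\{p_n\}$ and $\{q_n\}$ to lie in bounded subsets of $P$ and $Q$ respectively. I expect this coercivity-plus-saddle-inequality bookkeeping to be the delicate step, since one must exploit that $p_0$ and $q_0$ are eventually \emph{admissible test points} in the truncated problems.

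Once boundedness is secured, one extracts a subsequence $(p_{n_k},q_{n_k}) \to (p^*,q^*)$, which lies in $P \times Q$ because these sets are closed. Given arbitrary $(p,q) \in P \times Q$, both $p$ and $q$ belong to $P_{n_k} \times Q_{n_k}$ for $k$ large, so the saddle inequality yields $f(p,q_{n_k}) \leq f(p_{n_k},q_{n_k}) \leq f(p_{n_k},q)$; passing to the limit by continuity of $f$ produces
\begin{align*}
f(p,q^*) \leq f(p^*,q^*) \leq f(p^*,q) \quad \text{for all } (p,q) \in P \times Q.
\end{align*}
Thus $(p^*,q^*)$ is a saddle point of $f$ on $P \times Q$, and Lemma~\ref{magari2} then delivers the equality of $\max_{p \in P} \inf_{q \in Q} f$ and $\min_{q \in Q} \sup_{p \in P} f$, completing the proof.
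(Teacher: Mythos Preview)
Your argument is correct: the truncation to $P_n\times Q_n$, the application of Sion's minimax theorem on compact convex sets, the coercivity bookkeeping to bound $(p_n,q_n)$, and the subsequential passage to the limit all go through as you describe.

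However, there is nothing to compare against: the paper does not prove Theorem~\ref{hartung} at all. Its entire ``proof'' is the sentence ``The theorem is Proposition~2.2 in \cite{ET}, pp.~173,'' i.e.\ a citation to Ekeland--Temam. So you have supplied a self-contained argument where the paper only gives a reference. Your truncation-and-compactness approach is in fact the standard route to such noncompact minimax results and is essentially how the cited proposition is established in that reference.
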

The theorem is  Proposition~2.2 in \cite{ET}, pp. 173.

%%% AUTHOR: optional acknowledgments here
\section*{Acknowledgments} %%  you may comment this out if no Ackno
We are very grateful to several people for discussions and suggestions that led us to noticing  the duality between the Hamming cube and the square function: G.~Aubrun for valuable remarks on optimizers in (\ref{Guillame});   D.~Bilyk for providing the reference to sharp constants for Square functions~\cite{Wang}; R.~O'Donell for providing the references; R.~Lata\l a for pointing out example (\ref{Latala}); S.~Petermichl for bringing our attention to  Bellman functions in Square function estimates and Poincar\'e inequalities for the Gaussian measure; S.~Treil for attracting our attention to Chang--Wilson--Wolff's superexponential bound (Corollary~\ref{Wolff})   and its similarity to the Gaussian concentration inequality; R.~van Handel for references, including (\ref{piquard1}) and (\ref{hhu}),  and making several important remarks. We thank an anonymous referee for helpful comments and remarks. 

%%% AUTHOR:
%%% Bibliography goes here. Note that the arXiv cannot process bibtex
%%% or biber bibliographies.  Example of acceptable bibliograpy format:
\bibliographystyle{amsplain}

\begin{thebibliography}{99}

\bibitem{BM} {\sc F.~Barthe, B.~Maurey}, {\em Some remarks on isoperimetry of Gaussian type}, Annales de l'Institut Henri Poincare (B) Probability and Statistics, Vol. 36, Iss. 4, pp. 419--434

\bibitem{Efraim} {\sc L.~Ben-Efraim, F.~Lust-Piquard}, {\em Poincar\'e type inequalities on the discrete cube and in the CAR algebra}, Probability Theory and Related Fields, Vol. 141, Iss. 3--4, pp. 569--602 (2008)

\bibitem{BOB} {\sc S.~G.~Bobkov, F.~G\"otze}, {\em Discrete isoperimetric and Poincar\'e-type inequalities}, Probab. Theory Relat. Fields 114, 245--277 (1999)

\bibitem{BUR} {\sc D.~Burkholder}, {\em Sharp inequalities for martingales and stochastic integrals}, Colloque Paul L\'evy (Palaiseau, 1987), Ast'erisque 157-158 (1988), 75--94

\bibitem{BUR02} {\sc D.~L.~Burkholder}, {\em Boundary Value Problems and Sharp Inequalities for Martingale Transforms}, Ann. Probab.,  Vol. 12, No. 3,  (1984), 647--702

\bibitem{BUR01} {\sc D.~L.~Burkholder, R.~F.~Gundy}, {\em Extrapolation and interpolation of quasi-linear operators on martingales}, Acta. Math.,  Vol. 124 (1970), 249--304

\bibitem{CWW} {\sc A.~Chang, J.~M.~Wilson, Th.~Wolff}, {\em Some weighted norm inequalities concerning the Schr\"odinger operators }, Comment. Math. Helvetici, Vol. 60, 1985, 217--246

\bibitem{Davis} {\sc B.~Davis}, {\em On the $L^{p}$ norms of stochastic integrals and other martingales}, Duke Math. J. Vol. 43, pp. 697--704 (1976) 


\bibitem{ET} {\sc I.~Ekelan, R.~Temam}, {\em Convex analysis and variational problems}, North-Holland Publishing Company, Amsterdam, 1976


\bibitem{Harp} {\sc L.~H.~Harper}, {\em Optimal numberings and isoperimetric problems on graphs}, J.~Combin. Theory, pp. 385--393, 1996


\bibitem{IVVO} {\sc P.~Ivanisvili, A.~Volberg}, {\em Poincar\'e inequality 3/2 on the Hamming cube}, preprint arXiv:1608.4021


\bibitem{IVVO1} {\sc P.~Ivanisvili, A.~Volberg}, {\em Isoperimetric functional inequalities via the maximum principle: the exterior differential systems approach}, preprint arXiv: 1511.06895


\bibitem{Ledoux} {\sc M.~Ledoux}, {\em Concentration of measure and logarithmic Sobolev inequalities}, In S\'eminare de Probabilit\'es XXXIII, 120--216. Lecture Notes in Math. Vol. 1709. Springer, Berlin, 1999


\bibitem{Ledoux01} {\sc M.~Ledoux}, {\em Isoperimetry and Gaussian Analysis}, Lectures on Probability Theory and Statistics, Vol. 1648 of the series Lecture Notes in Mathematics pp 165--294


\bibitem{NAOR} {\sc A.~Naor G.~Schechtman}, {\em Remarks on Non linear Type and Pisier's Inequality}, Journal f\"ur die reine und angevandte Mathematik (Crelle's Journal) 552 (2002) 213--236


\bibitem{NTV} {\sc F.~Nazarov, S.~Treil, A.~Volberg}, {\em Bellman function in stochastic control and harmonic analysis}, Systems, approximation, singular integral operators, and related topics (Bordeaux, 2000), 393--423, Oper. Theory Adv.
Appl., 129, Birkh\"auser, Basel, 2001


\bibitem{OSE} {\sc A.~Osekowski}, {\em Sharp Martingale and Semimartingale Inequalities}, Monografie Matematyczne Vo. 72, Springer, Basel

\bibitem{Pisier} {\sc G.~Pisier}, {\em Probabilistic methods in the geometry of Banach spaces}, in ``Probability and Analysis, Varenna (Italy) 1985'', Lecture Notes in Math. 1206, 167--241, Springer Verlag (1986)


\bibitem{Talagrand} {\sc M.~Talagrand}, {\em Isoperimetry, logarithmic Sobolev inequalities on the discrete cube, and Margulis' graph connectivity}, Geom. Funct. Anal. 3, No. 3, 295--314 (1993)

\bibitem{Wang} {\sc G.~Wang}, {\em Sharp square function inequalities for conditionally symmetric martingales}, Transaction of the American Mathematical Society, Vol. 328, No. 1 (1991)

\bibitem{Wang2} {\sc G.~Wang}, {\em Some sharp inequalities for conditionally symmetric martingales}, Ph.D. Thesis, University of Illinois at Urbana-Champaign, 1989

\end{thebibliography}

%% AUTHOR: You can generate such a bibliography from a .bib file by 
%% running pdflatex/bibtex/pdflatex/pdflatex and then pasting the .bbl file
%% between \begin{thebibliography} and \end{bibliography}

%%% AUTHOR: Include a short description of each author following the
%%% structure below. Use the same short tags used previously.  
%%% Use \imageat{} and \imagedot{} instead of "@" and "." in
%%% email addresses-this replaces the symbols with graphics to avoid 
%%% e-mail address harvesting from the .pdf file

\begin{dajauthors}
\begin{authorinfo}
  Paata Ivanisvili\\
  Mathematics Department\\
  Princeton University\\
  Princeton, NJ 08544\\
  and\\
  Department of Mathematics\\
  University of California, Irvine\\
  Irvine, CA 92697-3875\\
  paatai\imageat{}princeton\imagedot{}edu \\
  %\url{https://www.cs.elte.hu/erdos}
\end{authorinfo}
\begin{authorinfo}
  Fedor Nazarov\\
  Department of Mathematics\\
  Kent State University\\
  Kent, OH 44240\\
  nazarov\imageat{}math\imagedot{}kent\imagedot{}edu \\
  %\url{http://www.csc.kth.se/~johanh}
\end{authorinfo}
\begin{authorinfo}
  Alexander Volberg\\
  Department of Mathematics\\
  Michigan State University\\
  East Lansing, MI 48823\\
  volberg\imageat{}math\imagedot{}msu\imagedot{}edu\\
  %\url{http://www.cs.elte.hu/~lovasz}
\end{authorinfo}
%\begin{authorinfo}[andy]
 % Andrew Chi-Chih Yao\\
%  Professor\\
 % etc.
%\end{authorinfo}
\end{dajauthors}

\end{document}